\documentclass[12pt]{article}
\usepackage[usenames]{color}
%
\usepackage{pst-plot}
\usepackage{url}
\usepackage{subfigure}
\usepackage{amsfonts,mathrsfs}
\usepackage{amssymb,amsmath,amsthm}
\usepackage{verbatim}
\usepackage{acronym}
\usepackage{epsfig}
\usepackage{graphicx}
\usepackage{enumerate}



\def\fskip#1{}

\newtheorem{theorem}{Theorem}

\newtheorem{corollary}{Corollary}

\newtheorem{definition}{Definition}

\newtheorem{lemma}{Lemma}

\textwidth 6.2in \textheight 9in \setlength{\topmargin}{-0.3in}
\setlength{\oddsidemargin}{0.1in} \setlength{\evensidemargin}{0.1in}
\newcommand{\Rm}{\mathbb{R}^m}
\newcommand{\M}{\mathscr{M}}
\newcommand{\Einf}{\mathcal{E}^{\infty}}

\def\1{{\bf 1}}
\def\E{\mathcal{E}}

\def\e{{\bf e}}

\def\e{\epsilon}

\newcommand{\al}{\alpha}

\def\R{\mathbb{R}}

\def\Z{\mathbb{Z}}

\newcommand{\Ginf}{G^{\infty}}
\newcommand{\Ac}{\{A(k)\}}
\newcommand{\Atc}{\{\tilde{A}(k)\}}
\newcommand{\At}{\tilde{A}}
\newcommand{\Bc}{\{B(k)\}}

\newcommand{\Ic}{\{I\}}

\newcommand{\Qi}{P^{(\xi)}}

\newcommand{\Pc}{\{P(k)\}}
\newcommand{\Ps}{\mathscr{P}}

\newcommand{\perm}{\mathscr{P}}

\newcommand{\Sc}{\{S(k)\}}
\newcommand{\Tc}{\{T(k)\}}
\newcommand{\xc}{\{x(k)\}}
\newcommand{\yc}{\{y(k)\}}

\title{On Backward Product of Stochastic Matrices}
\author{Behrouz Touri and Angelia Nedi\'c\thanks{Department of Industrial
and Enterprise Systems Engineering, University of
Illinois, Urbana, IL 61801,
Email: \{touri1,angelia\} @illinois.edu. This research
is supported by the National Science Foundation under
CAREER grant CMMI 07-42538.}
}
\begin{document}
\maketitle
\begin{abstract}
We study the ergodicity of backward product of stochastic and doubly stochastic
matrices by introducing the concept of absolute infinite flow property.
We show that this property is necessary for ergodicity of any chain of
stochastic matrices, by defining and exploring the properties of a
rotational transformation for a stochastic chain.
Then, we establish that the absolute infinite flow property is equivalent
to ergodicity for doubly stochastic chains. Furthermore, we develop a rate of
convergence result for ergodic doubly stochastic chains. We also
investigate the limiting behavior of a doubly stochastic chain and
show that the product of doubly stochastic matrices is convergent up to
a permutation sequence. Finally, we apply the results to
provide a necessary and sufficient condition for the absolute asymptotic
stability of a discrete linear inclusion driven by doubly stochastic matrices.
\end{abstract}

\section{Introduction}\label{sec:introduction}

The study of forward product of an inhomogeneous chain of stochastic
matrices is closely related to the limiting behavior, especially
ergodicity, of inhomogeneous Markov chains. The earliest study on the
forward product of inhomogeneous chains of stochastic matrices is the work of
Hajnal in~\cite{hajnal}. Motivated by a homogeneous Markov chain, Hajnal
formulated the concepts of ergodicity in weak and strong senses for
inhomogeneous Markov chains and developed some sufficient conditions for both
weak and strong ergodicity of such chains. Using the properties of
scrambling matrices that were introduced in~\cite{hajnal},
Wolfowitz~\cite{wolfowitz} gave a condition under which all the chains driven
from a finite set of stochastic matrices are strongly ergodic. In his elegant
work \cite{Shen}, Shen gave geometric interpretations and provided some
generalizations of the results in~\cite{hajnal} by considering vector
norms other than $\|\cdot\|_\infty$, which was originally used in~\cite{hajnal}
to measure the scrambleness of a matrix.

The study of backward product of row-stochastic matrices, however,
was motivated by different applications all of which were in search of a form
of a consensus between a set of processors, individuals, or agents.
DeGroot~\cite{DeGroot} studied such a product (for a homogeneous chain)
as a tool for reaching consensus on a distribution of a certain unknown
parameter among a set of agents. Later, Chatterjee and Seneta~\cite{SenetaCons}
provided a theoretical framework for reaching consensus by studying
the backward product of an inhomogeneous chain of stochastic matrices.
Motivated by the theory of inhomogeneous Markov chains, they defined
the concepts of weak and strong ergodicity in this context and showed that
those two properties are equivalent. Furthermore, they developed the theory of
coefficients for ergodicity. Motivated by some distributed computational
problems, in \cite{Tsitsiklis86}, Tsitsiklis and Bertsekas studied such
a product from the dynamical system point of view. In fact, they considered
a dynamics that enables an exogenous input as well as delays in the system.
Through the study of such dynamics, they gave a more practical conditions for
a chain to ensure the consensus. The work in~\cite{Tsitsiklis86} had a great
impact on the subsequent studies of distributed estimation and
control problems.

The common ground in the study of both forward and backward products of
stochastic matrices are the chains of doubly stochastic matrices.
By transposing the matrices in such a chain, forward product of
matrices can be transformed into backward product of the transposes of the
matrices. Therefore, in the case of doubly stochastic matrices,
any property of backward products translates to the same property for
forward products.
However, since the transposition of a row-stochastic matrix is not
necessarily a row-stochastic matrix, the behavior of the
forward and backward products of stochastic matrices can be quite different
in general.

Here, we study the backward product of a chain of stochastic matrices
in general, and then focus on doubly stochastic matrices in particular.
We start by introducing a concept of {\it absolute infinite flow property}
as a refinement of infinite flow property, proposed in our earlier work
in~\cite{ErgodicityPaper}. We demonstrate that
the absolute infinite flow property
is more stringent than the infinite flow property, by showing that
a sequence of permutation matrices cannot have the
absolute infinite flow property, while it may have the infinite flow property.
We introduce a concept of
{\it rotational transformation}, which plays a central role in our exploration
of the relations between the absolute infinite flow property and ergodicity of
a stochastic chain.
In fact, using the properties of the rotational transformation, we show that
the {\it absolute infinite flow property is necessary for ergodicity of any
stochastic chain}.
However, even though this property requires a rich structure for
a stochastic chain, it is still not sufficient for the ergodicity, which we
illustrate on an example. In pursue of chains for which this property is
sufficient for ergodicity, we come to identify a class of
{\it decomposable chains},
which have a particularly simple test for the absolute infinite flow property.
Upon exploring decomposable chains, we show that doubly stochastic chains
are decomposable by using Birkhoff-von Neumann theorem.
Finally, we show that the {\it ergodicity and absolute infinite flow property
are equivalent for doubly stochastic chains} and provide
a convergence rate result for ergodic doubly stochastic chains
by establishing a result on limiting behavior of doubly stochastic chain
that need not be ergodic. We conclude by considering a discrete linear
inclusion system driven by a subset of stochastic or doubly stochastic
matrices. We provide a necessary and sufficient conditions for
the absolute asymptotic stability of such discrete linear inclusions.

This work is a continuation of our earlier work
in~\cite{ErgodicityPaper,TouriNedich:Approx} where
we have studied random backward products for
independent, not necessarily identically distributed, random matrix processes
with row-stochastic matrices. In~\cite{ErgodicityPaper},
we have introduced \textit{infinite flow property} and
shown that this property is necessary for ergodicity of any chain
of row-stochastic matrices. Moreover, we have shown that this property
is also sufficient for ergodicity of such chains
under some additional conditions on the matrices.
In~\cite{TouriNedich:Approx}, we have extended some of our
results from~\cite{ErgodicityPaper} to a larger class of row-stochastic
matrices by using some approximations of chains. The setting
in~\cite{ErgodicityPaper,TouriNedich:Approx} has been within
random matrix processes with row-stochastic matrices.
Unlike our work in~\cite{ErgodicityPaper,TouriNedich:Approx},
the work in this present paper is focused on deterministic sequences of
row-stochastic matrices. Furthermore,
the new concept of the absolute infinite flow property is demonstrated to be
significantly stronger than the infinite flow property. Finally,
our line of analysis in this paper relies on the development of some new
concepts, such as {\it rotational transformation and decomposable chains}.
Our result on convergence rate for ergodic doubly stochastic chains extends
the rate result in~\cite{NedichOlshevskyOzdaglar:09} to a larger class of
doubly stochastic chains, such as those that may not have uniformly bounded
positive entries or uniformly bounded diagonal entries.

The main new results of this work include:
(1) Formulation and development of
\textit{absolute infinite flow property}. We show that this property
is necessary for ergodicity of any stochastic chain, despite the fact
that it is much stronger than infinite flow property.
(2) Introduction and exploration of \textit{rotational transformation}
of a stochastic chain with respect to a permutation chain. We establish that
rotational transformation preserves several properties of a chain,
such as ergodicity and absolute infinite flow property.
(3) Establishment of equivalence between ergodicity and absolute infinite flow
property for doubly stochastic chains. We accomplish this
through the use of the Birkhoff-von Neumann decomposition of doubly stochastic
matrices and properties of rotational transformation of a stochastic chain.
(4) Development of a rate of convergence for ergodic doubly stochastic chains.
(5) Establishment of the limiting behavior for doubly stochastic chains.
We prove that a product of doubly stochastic matrices is convergent up
to a permutation sequence. (6) Development of necessary and sufficient conditions for the absolute asymptotic stability of a discrete linear inclusion system driven by doubly stochastic matrices.

The structure of this paper is as follows: in Section~\ref{subsec:notation},
we introduce the notation that we use throughout the paper.
In Section~\ref{sec:ergodic}, we discuss the concept of ergodicity and
introduce the absolute infinite flow property. In Section~\ref{sec:absolute},
we introduce and investigate the concept of rotational transformation.
Using the properties of this transformation,
we show that this property is necessary for ergodicity of any stochastic chain.
Then, in Section~\ref{sec:decomposable} we introduce decomposable chains and
study their properties. In Section~\ref{sec:doublystoch},
we study the product of doubly stochastic matrices by exploring ergodicity,
rate of convergence, and the limiting behavior in the absence of ergodicity.
In Section~\ref{sec:switch}, we study the absolute asymptotic stability of
a discrete inclusion system driven by doubly stochastic matrices.
We summarize the development in Section~\ref{sec:conclusion}.

\subsection{Notation and Basic Terminology}\label{subsec:notation}
We view all vectors as columns. For a vector $x$, we write $x_i$
to denote its $i$th entry, and we write $x\ge0$ ($x>0$) to denote that
all its entries are nonnegative (positive). We use $x^T$ to denote
the transpose of a vector $x$. We write $\|x\|$ to denote the standard
Euclidean vector norm i.e., $\|x\|=\sqrt{\sum_{i}x_i^2}$.
We use $e_i$ to denote the vector with the $i$th entry equal to~1 and
all other entries equal to~$0$, and we write
$e$ for the vector with all entries equal to~$1$.
We say that a vector $v\in\Rm$ is stochastic if $v_i\ge 0$ for all $i$
and $\sum_{i=1}^{m}v_i=1$. For a given set $C$ and
a subset $S$ of $C$, we write $S\subset C$ to denote
that $S$ is a proper subset of $C$. A set $S\subset C$ such that
$S\ne\emptyset$ is referred to as a \textit{nontrivial} subset of $C$.
We write $[m]$ to denote the integer set $\{1,\ldots,m\}$.
For a set $S\subset[m]$, we let $|S|$ be the cardinality of the set $S$
and $\bar S$ be the complement of $S$ with respect to $[m]$, i.e.,
$\bar S=\{i\in[m]\mid i\notin S\}$.

We denote the identity matrix by $I$. For a matrix $A$, we use  $A_{ij}$ to
denote its $(i,j)$th entry, $A_i$ and $A^j$ to denote its $i$th row and
$j$th column vectors, respectively, and $A^T$ to denote its transpose.
We write $\|A\|$ for the matrix norm induced by the Euclidean vector norm.
A matrix $A$ is {\it row-stochastic} when all of its rows are stochastic
vectors. Since we deal exclusively with
row-stochastic matrices, we refer to such matrices simply as {\it stochastic}.
A matrix $A$ is {\it doubly stochastic} when both $A$ and $A^T$ are stochastic.
We say that a chain of matrices $\Ac$ is a stochastic chain if $A(k)$ is
a stochastic matrix for all $k\geq 0$. Similarly, we say that $\Ac$ is
a doubly stochastic chain if $A(k)$ is a doubly stochastic matrix for all
$k\geq 0$. We may refer to a stochastic chain or a doubly stochastic chain
simply as a chain when there is no ambiguity on the nature of
the underlying chain.

For an $m\times m$ matrix $A$, we use $\sum_{i<j}A_{ij}$ to denote
the summation of the entries $A_{ij}$ over all $i,j\in [m]$ with $i<j$.
Given a nonempty index set $S\subseteq[m]$ and a matrix $A$, we write $A_S$
to denote the following summation:
\[A_S=\sum_{i\in S,j\in\bar{S}}
A_{ij}+\sum_{i\in \bar{S},j\in S}A_{ij}.\]
Note that $A_S$ satisfies $A_S=\sum_{i\in S,j\in\bar{S}}(A_{ij}+A_{ji})$.

An $m\times m$ matrix $P$ is a permutation matrix if it contains exactly one
entry equal to $1$ in each row and each column. Given a permutation matrix $P$,
we use $P(S)$ to denote the image of an index set $S\subset[m]$
under the permutation $P$;
specifically $P(S)=\{i\in[m]\mid P_{ij}=1\mbox{ for some $j\in S$}\}$.
We note that a set $S\subset [m]$ and its image $P(S)$ under a permutation $P$
have the same cardinality, i.e., $|S|=|P(S)|$. Furthermore, for any permutation
matrix $P$ and any nonempty index set $S\subset [m]$,
the following relation holds:
\[\sum_{i\in P(S)}e_i = P\sum_{j\in S}e_j.\]
We denote the set of $m\times m$ permutation matrices by $\perm_m$. Since there
are $m!$ permutation matrices of size $m$, we may assume that the set of
permutation matrices is indexed, i.e., $\perm_m=\{\Qi\mid 1\leq \xi\leq m!\}$.
Also, we say that $\Pc$ is a permutation sequence if $P(k)\in \perm_m$ for all
$k\geq 0$. The sequence $\{I\}$ is the permutation sequence
$\{P(k)\}$ with $P(k)=I$ for all $k$, and it is
referred to as the \textit{trivial permutation sequence}.

We view a directed graph $G$ as an ordered set $(V,E)$ where $V$ is a finite
set of vertices (the vertex set)
and $E\subseteq V\times V$ is the edge set of $G$.
For two vertices $u,v\in V$, we often use $u\to v$ to denote the edge
$(u,v)\in E$. A cycle in $G=(V,E)$ is a sequence of vertices
$u_0,u_1,\ldots,u_{r-1},u_r=u_0$ such that $u_\tau\to u_{\tau+1}$ for all
$\tau\in \{0,\ldots,r-1\}$. We denote such a cycle by
$u_0\to u_1\to\cdots u_{r-1}\to u_r=u_0$. Throughout this paper,
we assume that the vertices of a cycle are distinct. Note that a loop
$(v,v)\in E$ is also a cycle. We say that a directed graph $G$ is cycle-free
when $G$ does not contain a cycle. We say that $u_0\to u_1\to u_2\to \cdots$
is a walk in $G$ if $u_i\to u_{i+1}$ for all $i\geq 0$.

\section{Ergodicity and Absolute Infinite Flow}\label{sec:ergodic}
In this section, we discuss the concepts of ergodicity and infinite flow
property, and we introduce a more restrictive property than infinite flow,
which will be a central concept in our later development.
\subsection{Ergodic Chain}\label{sec:ergchain}
Here, we define the ergodicity for a backward product of a chain $\Ac$ of
$m\times m$ stochastic matrices $A(k)$.
For $k>s\geq 0$, let $A(k:s)=A(k-1)A(k-2)\cdots A(s)$. We use the following
definition for ergodicity.
\begin{definition}\label{def:ergodicity}
We say that a chain $\Ac$ is \textit{ergodic} if
\[\lim_{k\rightarrow\infty}A(k:s)=ev^T(s)\qquad\mbox{for all $s\geq 0$},\]
where $v(s)$ is a stochastic vector for all $s\geq 0$.
\end{definition}

In other words, a chain $\Ac$ is ergodic if the infinite backward product
$\cdots A(s+2)A(s+1)A(s)$ converges to a matrix with identical rows,
which must be stochastic vectors since the chain $\Ac$ is stochastic.

To give an intuitive understanding of the backward product $A(k:s)$ of a
stochastic chain $\Ac$, consider the following simple model for opinion
dynamics for a set $[m]=\{1,\ldots,m\}$ of $m$ agents. Suppose that
at time $s$, each of the agents has a belief about a certain issue that can be
represented by a scalar $x_i(s)\in\R$. Now, suppose that agents' beliefs evolve
in time by the following dynamics: at any time $k\geq s$, the agents meet and
discuss their beliefs about the issue and, at time $k+1$, they
update their beliefs by taking convex combinations of the agents' beliefs
at the prior time $k$, i.e.,
\begin{align}\label{eqn:hypopiniondynamics}
x_i(k+1)=\sum_{j=1}^mA_{ij}(k)x_j(k)\qquad \mbox{for all $i\in[m]$},
\end{align}
with $\sum_{j=1}^mA_{ij}(k)=1$ for all $i\in[m]$, i.e., $A(k)$ is stochastic.
In this opinion dynamics model, we are interested in properties of the chain
$\Ac$ that will ensure the $m$ agents reach a consensus on their beliefs
for any starting time $s\geq 0$ and any initial belief profile $x(s)\in \Rm$.
Formally, we want to determine some restrictions on the chain which will
guarantee that $\lim_{k\rightarrow\infty}\left(x_i(k)-x_j(k)\right)=0$ for any
$i,j\in[m]$ and any $s\geq 0$. As proven in~\cite{SenetaCons}, this limiting
behavior is equivalent to ergodicity of $\Ac$ in the sense of
Definition~\ref{def:ergodicity}.

One can visualize the dynamics in Eq.~\eqref{eqn:hypopiniondynamics} using
the \textit{trellis graph} associated with a given stochastic chain.
The trellis graph of a stochastic chain $\Ac$ is an infinite directed
weighted graph
$G=(V,E,\Ac)$, with the vertex set $V$ equal to the infinite grid
$[m]\times \mathbb{Z}^+$ and the edge set
\begin{align}\label{eqn:trellisedges}
E=\{\left((j,k),(i,k+1)\right)\mid j,i\in[m],\ k\geq 0 \}.
\end{align}
In other words, we consider a copy of the set $[m]$ for any time $k\geq 0$ and
we stack these copies over time, thus generating the infinite vertex set
$V=\{(i,k) \mid i\in[m],\ k\ge0\}$.
We then place a link from each $j\in[m]$ at time $k$ to every $i\in[m]$
at time $k+1$, i.e., a link from each vertex $(j,k)\in V$ to each vertex
$(i,k+1)\in V$. Finally, we assign the weight $A_{ij}(k)$ to the link
$((j,k),(i,k+1))$. Now, consider the whole graph as an information tunnel
through which the information flows: we inject a scalar $x_i(0)$ at each
vertex $(i,0)$ of the graph. Then, from this point on, at each time
$k\geq 0$, the information is transfered from time $k$ to time $k+1$
through each edge of the graph that acts as a communication link.
Each link attenuates the in-vertex's value with its weight, while each vertex
sums the information received through the incoming links. One can observe that
the resulting information evolution is the same as the dynamics given in
Eq.~\eqref{eqn:hypopiniondynamics}. As an example, consider the  $2\times 2$
static chain $\Ac$ with $A(k)$ defined by:
\begin{align}\label{eqn:trellisexample}
A(k)=\left[
\begin{array}{cc}
\frac{1}{4}&\frac{3}{4}\\
\frac{3}{4}&\frac{1}{4}
\end{array}
\right]\qquad \mbox{for $k\geq 0$}.
\end{align}
The trellis graph of this chain and the resulting dynamics is depicted in
Figure~\ref{fig:trellisexample}.

\begin{figure}
\centering
\includegraphics[width=0.8\linewidth]{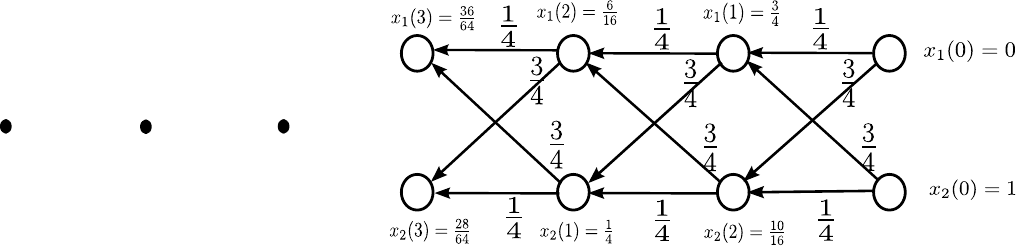}
\caption{\label{fig:trellisexample}
The trellis graph of the $2\times 2$ chain $\Ac$ with weights
$A(k)$ given in Eq.~\eqref{eqn:trellisexample}.}
\end{figure}

\subsection{Infinite Flow and Absolute Infinite Flow}\label{sec:infflow}
Here, we discuss the infinite flow property, as introduced
in~\cite{ErgodicityPaper}, and a more restrictive version of this
property which turned out to be necessary for ergodicity of stochastic chains,
as shown later on in Section~\ref{sec:rotation}.

We start by recalling the definition of the infinite flow property
from~\cite{ErgodicityPaper}.
\begin{definition}\label{def:infiniteflow}
A stochastic chain $\Ac$ has the infinite flow property if
\[\sum_{k=0}^{\infty}A_S(k)=\infty\qquad
\hbox{for all nonempty $S\subset[m]$},\]
where $A_S(k)=\sum_{i\in S,j\in\bar{S}}\left(A_{ij}(k)+A_{ji}(k)\right)$.
\end{definition}

Graphically, the infinite flow property requires that in the trellis graph of
a given model $\Ac$, the weights on the edges between $S\times \Z^+$ and
$\bar{S}\times \Z^+$ sum up to infinity.

To illustrate the infinite flow property, consider the opinion dynamics in
Eq.~\eqref{eqn:hypopiniondynamics}. One can interpret $A_{ij}(k)$ as the
\textit{credit} that agent $i$ gives to agent $j$'s opinion at time $k$.
Therefore, the sum $\sum_{i\in S,j\in\bar{S}}A_{ij}(k)$ can be interpreted as
the credit that the agents' group $S\subset[m]$ gives to the opinions of
the agents that are outside of $S$ (the agents in $\bar{S}$) at time $k$.
Similarly, $\sum_{i\in \bar{S},j\in S}A_{ij}(k)$ is the credit that the agents'
group $\bar{S}$ gives to the agents in group $S$. The intuition behind
the concept of infinite flow property is that without having infinite
accumulated credit between agents' groups $S$ and $\bar{S}$,
we cannot have an agreement among the agents for any starting time $s$
of the opinion dynamic and for any opinion profile $x(s)$ of the agents.
In other words, the infinite flow property ensures that there is enough
information flow between the agents as time passes by.

We have established in~\cite{ErgodicityPaper} that the infinite flow property
is necessary for ergodicity of a stochastic chain, as restated below
for later use.

\begin{theorem}\label{thrm:necessaryergodicity}\cite{ErgodicityPaper}
The infinite flow property is a necessary condition for ergodicity of
any stochastic chain.
\end{theorem}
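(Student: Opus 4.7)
The plan is to argue by contradiction: assume the chain is ergodic and yet fails the infinite flow property at some nonempty $S\subset[m]$, and build an initial condition whose trajectory under the dynamics in Eq.~\eqref{eqn:hypopiniondynamics} provably does not reach consensus. Ergodicity forces $x(k)=A(k:s)x(s)\to(v^T(s)x(s))e$, i.e., all coordinates of $x(k)$ merge, so exhibiting a trajectory with a persistent gap between two coordinates will do.

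Concretely, suppose $\sum_{k=0}^\infty A_S(k)<\infty$ for some nonempty $S\subset[m]$. Fix $\epsilon\in(0,1)$ and pick $N$ so that $\sum_{k\ge N}A_S(k)<\epsilon$. Start the dynamics at time $s=N$ with the indicator initial condition $x_i(N)=1$ for $i\in S$ and $x_i(N)=0$ for $i\in\bar S$; since each update is a convex combination, $x(k)\in[0,1]^m$ for all $k\ge N$. The main step is to track the two quantities
\[
m_k=\min_{i\in S}x_i(k),\qquad M_k=\max_{i\in\bar S}x_i(k),
\]
and show they drift apart only slowly. For $i\in S$, splitting the sum over $j\in S$ and $j\in\bar S$ and using $x_j(k)\ge m_k$ for $j\in S$ together with $x_j(k)\ge 0$ gives
\[
x_i(k+1)\ge m_k\sum_{j\in S}A_{ij}(k)=m_k\Bigl(1-\sum_{j\in\bar S}A_{ij}(k)\Bigr)\ge m_k-\sum_{j\in\bar S}A_{ij}(k),
\]
so $m_{k+1}\ge m_k-\sum_{i\in S,j\in\bar S}A_{ij}(k)$. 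Symmetrically, for $i\in\bar S$, using $x_j(k)\le 1$ for $j\in S$ and $x_j(k)\le M_k$ for $j\in\bar S$ yields $M_{k+1}\le M_k+\sum_{i\in\bar S,j\in S}A_{ij}(k)$. Telescoping and adding the two,
\[
m_k-M_k\ge(m_N-M_N)-\sum_{\tau=N}^{k-1}A_S(\tau)\ge 1-\epsilon>0\quad\text{for all }k\ge N.
\]

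Hence $\max_i x_i(k)-\min_i x_i(k)\ge m_k-M_k\ge 1-\epsilon$ for every $k\ge N$, so $x(k)$ cannot converge to any scalar multiple of $e$. On the other hand, ergodicity at $s=N$ gives $x(k)=A(k:N)x(N)\to e\,v^T(N)x(N)$, a constant vector whose spread is zero. This is the desired contradiction, so every nonempty $S\subset[m]$ must have $\sum_k A_S(k)=\infty$, proving infinite flow is necessary.

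The only delicate point is the bookkeeping in the two one-sided inequalities on $m_{k+1}$ and $M_{k+1}$; the key is to use the correct side of the convex-combination bound on each of $S$ and $\bar S$ and to replace the inner max over $i$ by the full sum $A_S(k)$, which is what makes the telescoping close. Everything else is routine given the interpretation of $A_S(k)$ as the total cross-boundary weight at step $k$.
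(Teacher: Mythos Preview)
Your argument is correct. The paper itself does not prove Theorem~\ref{thrm:necessaryergodicity}; it merely restates the result from~\cite{ErgodicityPaper} for later use, so there is no in-paper proof to compare against. Your contrapositive argument is a clean, self-contained derivation: pick a starting time $N$ beyond which the tail flow across $S$ is less than $\epsilon$, launch the indicator of $S$ at time $N$, and track $m_k=\min_{i\in S}x_i(k)$ and $M_k=\max_{i\in\bar S}x_i(k)$. The one-sided bounds $m_{k+1}\ge m_k-\sum_{i\in S,j\in\bar S}A_{ij}(k)$ and $M_{k+1}\le M_k+\sum_{i\in\bar S,j\in S}A_{ij}(k)$ are valid (the passage from the per-$i$ bound to the sum over $i$ is justified because a maximum is dominated by a sum of nonnegative terms), and the telescoping yields $m_k-M_k\ge 1-\epsilon$ for all $k\ge N$. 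Since ergodicity would force $x(k)=A(k:N)x(N)\to (v^T(N)x(N))e$, whose coordinate spread is zero, you obtain the contradiction. Nothing is missing.
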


Although the infinite flow property is necessary for ergodicity,
this property alone is not \textit{strong enough} to  disqualify some
stochastic chains from being ergodic, such as chains of permutation matrices.
As a concrete example consider a static chain $\{A(k)\}$ of
permutation matrices $A(k)$ given by
\begin{align}\label{eqn:examplepermutation}
A(k)=
\left[
\begin{array}{cc} 0&1\\ 1&0 \end{array}\right]\qquad\mbox{for $k\geq 0$}.
\end{align}
As a remedy for this situation, in~\cite{ErgodicityPaper} we
have imposed some additional conditions, including some feedback-type
conditions, on the matrices $A(k)$ that eliminate permutation matrices.
Here, we take a different approach.

Specifically, we will require a stronger infinite flow property
by letting the set $S$ in Definition~\ref{def:infiniteflow} vary with time.
In order to do so, we will consider sequences $\Sc$ of index sets
$S(k)\subset[m]$ with a form of regularity in the sense that
the sets $S(k)$ have the same cardinality for all $k$.
In what follows, we will reserve notation $\Sc$ for the sequences
of index sets $S(k)\subset[m]$. Furthermore,
for easier exposition, we define the notion of regularity
for $\Sc$ as follows.

\begin{definition}\label{def:regular}
A sequence $\Sc$ is regular if the sets $S(k)$ have the same (nonzero)
cardinality, i.e., $|S(k)|=|S(0)|$ for all $k\ge0$ and $|S(0)|\ne0$.
\end{definition}

The nonzero cardinality requirement in Definition~\ref{def:regular} is imposed
only to exclude the trivial sequence $\Sc$ consisting of empty sets.

Graphically, a regular sequence $\Sc$ corresponds to the subset
$\{(i,k)\mid i\in S(k),\ k\geq 0\}$ of vertices in the trellis graph
associated with a given chain. As an illustration, let us
revisit the $2\times 2$ chain given in Eq.~\eqref{eqn:examplepermutation}.
Consider the regular $\Sc$ defined by
\[S(k)=\left\{\begin{array}{cc}
\{1\}&\mbox{if $k$ is even},\\
\{2\}&\mbox{if $k$ is odd}.
\end{array}\right.\]
The vertex set $\{(i,k)\mid i\in S(k),\ k\geq 0\}$ associated with
$\Sc$ is shown in Figure~\ref{fig:infiniteflow}.

\begin{figure}
\begin{center}
\includegraphics[width=0.7\linewidth]{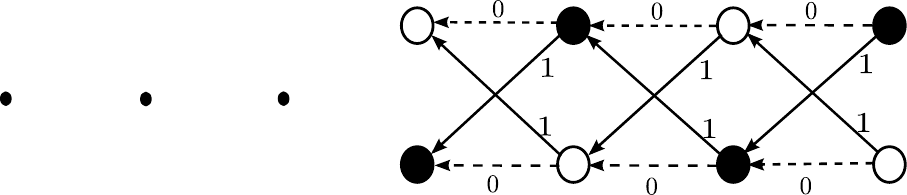}
\end{center}
\caption{\label{fig:infiniteflow} The trellis graph of the permutation chain
in Eq.~\eqref{eqn:examplepermutation}. For the regular sequence
$\Sc$, with $S(k)=\{1\}$ if $k$ is even and $S(k)=\{2\}$ otherwise,
the vertex set $\{(i,k)\mid i\in S(k),\ k\geq 0\}$ is marked by black vertices.
The flow $F(\Ac;\Sc)$, as defined in~\eqref{eqn:scflow},
corresponds to the summation of the weights on the dashed edges.}
\end{figure}

Now, let us consider a chain $\Ac$ of stochastic matrices $A(k)$.
Let $\Sc$ be any regular sequence. At any time $k$,
we define the flow associated with the entries of the matrix $A(k)$
across the index sets $S(k+1)$ and $S(k)$ as follows:
\begin{align}\label{eqn:flow}
A_{{S}(k+1),S(k)}(k)=\sum_{i\in S(k+1),j\in\bar{S}(k)}
A_{ij}(k)+\sum_{i\in \bar{S}(k+1),j\in S(k)}A_{ij}(k)\qquad\hbox{for $k\ge0$}.
\end{align}
The flow $A_{{S}(k+1),S(k)}(k)$ could be viewed as an instantaneous flow at
time $k$ induced by the corresponding elements in the matrix chain and the
index set sequence. Accordingly, we define the total flow of a chain $\Ac$ over
$\Sc$, as follows:
\begin{align}\label{eqn:scflow}
F(\Ac;\Sc)=\sum_{k=0}^{\infty}A_{{S}(k+1),S(k)}(k).
\end{align}
We are now ready to extend
the definition of the infinite flow property to time varying index sets $S(k)$.

\begin{definition}\label{def:absoluteinfflow}
A stochastic chain $\Ac$ has the absolute infinite flow property if
\[F(\Ac;\Sc)=\infty\quad
\hbox{for every regular sequence $\Sc$.}\]
\end{definition}

Note that the absolute infinite flow property of
Definition~\ref{def:absoluteinfflow} is more restrictive than
the infinite flow property of Definition~\ref{def:infiniteflow}.
In particular, we can see this by letting the set sequence $\Sc$
be static, i.e., $S(k)=S$ for all $k$ and some nonempty $S\subset[m]$.
In this case, the flow $A_{{S}(k+1),S(k)}(k)$ across the index sets
$S(k+1)$ and $S(k)$ as defined in Eq.~\eqref{eqn:flow} reduces to
$A_S(k)$, while the flow $F(\Ac;\Sc)$ in Eq.~\eqref{eqn:scflow} reduces to
$\sum_{k=0}^{\infty}A_S(k)$. This brings us to the quantities
that define the infinite flow property (Definition~\ref{def:infiniteflow}).
Thus, the infinite flow property requires that
the flow across a trivial regular sequence $\{S\}$ is infinite
for all nonempty $S\subset[m]$, which is evidently less restrictive
requirement than that of Definition~\ref{def:absoluteinfflow}.
In light of this, we see that if a stochastic chain $\Ac$ has the absolute
infinite property, then it has the infinite flow property.

The distinction between the absolute infinite flow property and
the infinite flow property is actually much deeper. Recall our example of
the chain $\Ac$ in Eq.~\eqref{eqn:examplepermutation}, which demonstrated that
a permutation chain may posses the infinite flow property.
Now, for this chain, consider the regular
sequence $\Sc$ with $S(2k)=\{1\}$ and $S(2k+1)=\{2\}$ for $k\geq 0$.
The trellis graph associated with $\Ac$ is shown in
Figure~\ref{fig:infiniteflow}, where $\Sc$ is depicted by black vertices.
The flow $F(\Ac;\Sc)$ corresponds to the summation of the weights on the dashed
edges in~Figure~\ref{fig:infiniteflow}, which is equal to zero in this case.
Thus, the chain $\Ac$ does not have the absolute flow property.

In fact, while some chains of permutation matrices may have the infinite flow
property, it turns out that no chain of permutation matrices has
the absolute infinite flow property. In other words, the absolute infinite flow
property is strong enough to filter out the chains of permutation matrices
in our search for necessary and sufficient conditions for ergodicity which
is a significant distinction between
the absolute infinite flow property and the infinite flow property.
To formally establish this, we turn our attention to an intimate
connection between a regular index sequence and a permutation sequence that
can be associated with the index sequence.

Specifically, an important feature of a regular sequence $\Sc$ is that it can
be obtained as the image of the initial set $S(0)$ under a certain permutation
sequence $\Pc$. This can be seen by noting that
we can always find a one-to-one matching
between the indices in $S(k)$ and $S(k+1)$ for all $k$,
since $|S(k)|=|S(k+1)|$.
As a result, any set $S(k)$ is an image of a finitely many permutations of
the initial set
$S(0)$; formally $S(k)=P(k-1)\cdots P(1)P(0)(S(0))$.
Therefore, we will refer to the set $S(k)$ {\it as the image of the set
$S(0)$ under $\Pc$ at time $k$}. Also, we will refer to the sequence $\Sc$
as {\it the trajectory of the set $S(0)$ under $\Pc$}.

As a first consequence of the definition,
we show that no chain of permutation matrices has
the absolute infinite property.

\begin{lemma}\label{lemma:noperm}
For any permutation chain $\Pc$, there exists a regular sequence $\Sc$
for which \[F(\Pc;\Sc)=0.\]
\end{lemma}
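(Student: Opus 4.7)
The plan is to construct $\Sc$ as the trajectory of $S(0)$ under the permutation chain $\Pc$ itself. Pick any singleton $S(0)=\{1\}\subset[m]$ (any nonempty proper subset works equally well), and define recursively
\[S(k+1)=P(k)(S(k)) \qquad \text{for all } k\ge 0.\]
Since each $P(k)$ is a bijection on $[m]$, we have $|S(k+1)|=|S(k)|$, so by induction $|S(k)|=|S(0)|=1$ for every $k$. Hence $\Sc$ is regular in the sense of Definition~\ref{def:regular}.

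Next I would show that the instantaneous flow $A_{S(k+1),S(k)}(k)$ in Eq.~\eqref{eqn:flow} vanishes at every time $k$. Fix $k$, and consider the term $\sum_{i\in \bar{S}(k+1),\,j\in S(k)} P_{ij}(k)$. For each $j\in S(k)$, the unique index $i$ with $P_{ij}(k)=1$ is $i=P(k)(j)$, which by construction lies in $P(k)(S(k))=S(k+1)$. Therefore no $j\in S(k)$ contributes to the sum over $i\in\bar{S}(k+1)$, and this sum is zero. By the symmetric argument, using that $P(k)$ is a bijection so $P(k)(\bar{S}(k))=\overline{P(k)(S(k))}=\bar{S}(k+1)$, the other sum $\sum_{i\in S(k+1),\,j\in \bar{S}(k)} P_{ij}(k)$ also vanishes. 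Hence $A_{S(k+1),S(k)}(k)=0$ for every $k\ge 0$.

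Summing over $k$ in Eq.~\eqref{eqn:scflow} gives $F(\Pc;\Sc)=0$, completing the proof.

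The proof is essentially a one-line observation once the right $\Sc$ is identified, so there is no real obstacle; the only point to get right is to let the index set evolve under the same permutations so that each $P(k)$ transports $S(k)$ bijectively onto $S(k+1)$ and $\bar{S}(k)$ bijectively onto $\bar{S}(k+1)$, killing every cross term.
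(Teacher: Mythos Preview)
Your proof is correct and follows exactly the same approach as the paper: the paper's entire argument is the one-line remark that the result follows by letting $\Sc$ be the trajectory of a nonempty set $S(0)\subset[m]$ under the permutation chain $\Pc$. You have simply filled in the details of why the flow then vanishes at every time step.
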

The result follows by letting $\Sc$ be the trajectory of
a nonempty set $S(0)\subset[m]$ under the permutation $\Pc$.

\section{Necessity of Absolute Infinite Flow for Ergodicity}
\label{sec:absolute}
As discussed in Theorem~\ref{thrm:necessaryergodicity}, the infinite
flow property is necessary for ergodicity of a stochastic chain.
In this section, we show that the absolute infinite flow
property is actually necessary for ergodicity of a stochastic chain despite
the fact that this property is much more restrictive than the infinite flow
property. We do this by considering a stochastic chain
$\Ac$ and a related chain, say $\Bc$, such that
the flow of $\Ac$ over a trajectory translates to a flow
of $\Bc$ over an appropriately defined trajectory. The technique that
we use for defining the chain $\Bc$ related to a given chain $\Ac$ is
developed in the following section. Then, in Section~\ref{sec:necessary},
we prove the necessity of the absolute infinite flow for ergodicity.
\subsection{Rotational Transformation}\label{sec:rotation}
Rotational transformation is a process that takes a chain and produces
another chain through the use of a permutation sequence $\Pc$.
Specifically, we have the following definition of the rotational transformation
with respect to a permutation chain.

\begin{definition}\label{def:rotation}
Given a permutation chain $\Pc$, the rotational transformation of an arbitrary
chain $\Ac$ with respect to $\Pc$ is the chain $\Bc$ given by
\[B(k)=P^T(k+1:0)A(k)P(k:0)\qquad\hbox{for $k\geq 0$},\]
where $P(0:0)=I$. We say that $\Bc$ is the rotational transformation
of $\Ac$ by $\Pc$.
\end{definition}

The rotational transformation has some interesting properties
for stochastic chains which we discuss in the following lemma.
These properties play a key role in the subsequent development, while they may
also be of interest in their own right.

\begin{lemma}\label{lemma:rotationproperties}
Let $\Ac$ be an arbitrary stochastic chain and $\Pc$ be an arbitrary
permutation chain. Let $\Bc$ be the rotational transformation of $\Ac$ by
$\Pc$. Then, the following statements are valid:
\begin{enumerate}[(a)]
\item \label{rot:trans}
The chain $\Bc$ is stochastic. Furthermore,
\[B(k:s)=P^T(k:0)A(k:s)P(s:0)\qquad\mbox{for any $k> s\ge0$},\]
where $P(0:0)=I$.
\item \label{rot:equi}
The chain $\Ac$ is ergodic if and only if the chain $\Bc$ is ergodic.
\item \label{rot:absflow} For any regular sequence $\Sc$ for $\Ac$,
there exists another regular sequence $\{T(k)\}$ for $\Bc$
such that $B_{T(k+1)T(k)}(k)=A_{S(k+1)S(k)}(k)$. Also, for any
regular sequence $\{T(k)\}$ for $\Bc$, there exists a regular sequence
$\Sc$ for $\Ac$ with $A_{S(k+1)S(k)}(k)=B_{T(k+1)T(k)}(k)$. In particular,
$F(\Ac;\Sc)=F(\Bc;\{T(k)\})$ and hence,
$\Ac$ has the absolute infinite flow property if and only if
$\Bc$ has the absolute infinite flow property.
\item \label{rot:flow}
For any $S\subset[m]$ and $k\geq 0$, we have
$A_{S(k+1),S(k)}(k)=B_{S}(k)$, where $S(k)$ is the image of
$S$ under $\Pc$ at time $k$, i.e., $S(k)=P(k:0)(S)$.
\end{enumerate}
\end{lemma}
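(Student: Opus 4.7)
The plan is to establish parts (a) and (d) by direct algebraic manipulation and then derive (b) and (c) as near-immediate corollaries. For (a), stochasticity follows because $P(k+1:0)$ and $P(k:0)$ are products of permutation matrices and so are themselves permutations, so $P^T(k+1:0) e = e$ and $P(k:0) e = e$, giving $B(k) e = e$. For the backward-product formula I would induct on $k$, using the cancellation $P(k:0) P^T(k:0) = I$ to collapse the inductive step:
\[
B(k)B(k:s) = P^T(k+1:0) A(k) \bigl[P(k:0) P^T(k:0)\bigr] A(k:s) P(s:0) = P^T(k+1:0) A(k+1:s) P(s:0).
\]

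Part (d) is the algebraic heart of the lemma. Write $\sigma_k$ for the permutation associated with $P(k:0)$, so that $P(k:0)(S) = \sigma_k(S) = S(k)$ and $P(k:0) e_i = e_{\sigma_k(i)}$. A one-line computation gives $B_{ij}(k) = e_i^T P^T(k+1:0) A(k) P(k:0) e_j = A_{\sigma_{k+1}(i),\sigma_k(j)}(k)$. Summing this identity over $i \in S$, $j \in \bar S$ and over $i \in \bar S$, $j \in S$, and then re-indexing through $\sigma_{k+1}$ and $\sigma_k$ (which map complements to complements and preserve cardinalities), yields $B_S(k) = A_{S(k+1),S(k)}(k)$, which is exactly~(d).

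For (c) I would apply the same index substitution with time-varying sets. Given a regular $\Sc$ for $\Ac$, set $T(k) = \sigma_k^{-1}(S(k))$; since each $\sigma_k$ is a bijection, $|T(k)| = |S(k)| = |S(0)|$, so $\{T(k)\}$ is regular, and the termwise identity derived in~(d) now reads $B_{T(k+1),T(k)}(k) = A_{S(k+1),S(k)}(k)$; summing over $k$ yields $F(\Ac;\Sc) = F(\Bc;\{T(k)\})$. The converse direction (given $\{T(k)\}$, use $S(k) = \sigma_k(T(k))$) is symmetric, and the equivalence of the absolute infinite flow property is then immediate. For~(b), I would take $k \to \infty$ in the product identity of~(a): if $\lim_k A(k:s) = e v^T(s)$, then using $P^T(k:0) e = e$ uniformly in $k$ gives
\[
\lim_{k\to\infty} B(k:s) = \lim_{k\to\infty} P^T(k:0)\, e v^T(s)\, P(s:0) = e\bigl(P^T(s:0) v(s)\bigr)^T,
\]
with $P^T(s:0) v(s)$ stochastic since $v(s)$ is; the reverse implication uses $A(k:s) = P(k:0) B(k:s) P^T(s:0)$ analogously.

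The main obstacle, such as it is, lies in (d): one must track two distinct permutations simultaneously (the left action of $P^T(k+1:0)$ and the right action of $P(k:0)$) and verify that their joint action sends the pair $(S,S)$ defining $B_S(k)$ to the pair $(S(k+1),S(k))$ defining $A_{S(k+1),S(k)}(k)$. Once this bookkeeping is pinned down, everything else is essentially substitution.
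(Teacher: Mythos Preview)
Your proposal is correct and follows essentially the same line as the paper's proof: induction for the product formula in~(a), and the index substitution $T(k)=P^T(k:0)(S(k))$ (your $\sigma_k^{-1}(S(k))$) for~(c), with~(d) being the specialization to $S(k)=P(k:0)(S)$. The only presentational difference is in~(b): the paper compares row differences via $\|B_i(k:t_0)-B_j(k:t_0)\|=\|A_{i(k)}(k:t_0)-A_{j(k)}(k:t_0)\|$ rather than passing to the limit in the product identity of~(a) directly, but both arguments rest on the same norm-preservation of permutation matrices and yield the same conclusion.
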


\begin{proof}
\noindent (a)\quad
By the definition of $B(k)$, we have
$B(k)=P^T(k+1:0)A(k)P(k:0)$. Thus, $B(k)$ is stochastic as the product
of finitely many stochastic matrices is a stochastic matrix.

The proof of relation
$B(k:s)=P^T(k:0)A(k:s)P(s:0)$ proceeds by induction on $k$ for $k> s$ and
an arbitrary but fixed $s\ge0$.
For $k=s+1$, by the definition of $B(s)$ (see Definition~\ref{def:rotation}),
we have $B(s)=P^T(s+1:0)A(s)P(s:0)$, while $B(s+1:s)=B(s)$ and
$A(s+1:s)=A(s)$.
Hence, $B(s+1,s)=P^T(s+1:0)A(s+1:s)P(s:0)$ which shows that
$B(k,s)=P^T(k:0)A(k:s)P(s:0)$ for $k=s+1$, thus implying that
the claim is true for $k=s+1$.

Now, suppose that the claim is true for some $k>s$, i.e.,
$B(k,s)=P^T(k:0)A(k:s)P(s:0)$ for some $k>s$.
Then, for $k+1$ we have
\begin{align}\label{eqn:induction}
B(k+1:s)=B(k)B(k:s)=B(k)\left(P^T(k:0)A(k:s)P(s:0)\right),
\end{align}
where the last equality follows by the induction hypothesis.
By the definition of $B(k)$, we have $B(k)=P^T(k+1:0)A(k)P(k:0)$, and
by replacing $B(k)$ by $P^T(k+1:0)A(k)P(k:0)$ in Eq.~\eqref{eqn:induction},
we obtain
\begin{align*}
B(k+1:s)
&=\left(P^T(k+1:0)A(k)P(k:0)\right)\left( P^T(k:0)A(k:s)P(s:0)\right)\cr
&=P^T(k+1:0)A(k)\left(P(k:0)P^T(k:0)\right)A(k:s)P(s:0)\cr
&=P^T(k+1:0)A(k)A(k:s)P(s:0),
\end{align*}
where the last equality follow from $P^TP=I$ which is valid for any
permutation matrix $P$, and the fact that the product of two
permutation matrices is a permutation matrix.
Since $A(k)A(k:s)=A(k+1:s)$, it follows that
\[B(k+1:s)=P^T(k+1:0)A(k+1:s)P(s:0),\]
thus showing that the claim is true for $k+1$.

\noindent (b)\quad
Let the chain $\Ac$ be ergodic and fix an arbitrary starting time $t_0\geq 0$.
Then, for any $\epsilon>0$, there exists a sufficiently large time
$N_{\epsilon}\geq t_0$, such that the rows of $A(k:t_0)$ are within
$\e$-vicinity of each other; specifically
$\|A_{i}(k:t_0)-A_j(k:t_0)\|\leq \epsilon$ for any $k\geq N_{\epsilon}$ and
all $i,j\in[m]$.
We now look at the matrix $B(k:t_0)$ and its rows. By part (a), we have
for all $k> t_0$,
\[B(k:t_0)=P^T(k:0)A(k:t_0)P(t_0:0).\]
Furthermore, the $i$th row of $B(k:t_0)$ can be represented as
$e_i^TB(k:t_0)$. Therefore, the norm of the difference between
the $i$th and $j$th row of $B(k:t_0)$ is given by
\[\|B_i(k:t_0)-B_j(k:t_0)\|=\|(e_i-e_j)^TB(k:t_0)\|
=\|(e_i-e_j)^TP^T(k:0)A(k:t_0)P(t_0:0)\|.\]
Letting  $e_{i(k)}=P(k:0)e_i$ for any $i\in[m]$, we further have
\begin{align}\label{eqn:ijdisteq}
\|B_i(k:t_0)-B_j(k:t_0)\|
&=\|(e_{i(k)}-e_{j(k)})^TA(k:t_0)P(t_0:0)\|\cr
&=\|(A_{i(k)}(k:t_0)-A_{j(k)}(k:t_0))P(t_0:0)\|\cr
&=\|A_{i(k)}(k:t_0)-A_{j(k)}(k:t_0)\|,
\end{align}
where the last inequality holds since $P(t_0:0)$ is a permutation matrix
and $\|Px\|=\|x\|$ for any permutation $P$ and any $x\in\Rm$.
Choosing $k\ge N_\epsilon$ and using $\|A_{i}(k:t_0)-A_j(k:t_0)\|\leq \epsilon$
for any $k\geq N_{\epsilon}$ and all $i,j\in[m]$,
we obtain
\[\|B_i(k:t_0)-B_j(k:t_0)\|\le\epsilon\qquad
\hbox{for any $k\geq N_{\epsilon}$ and all $i,j\in[m]$.}\]
Therefore, it follows that the ergodicity of $\Ac$ implies the ergodicity of
$\Bc$.

For the reverse implication we note that $A(k:t_0)=P(k:0)B(k:t_0)P^T(t_0:0)$,
which follows by part~(a) and the fact $PP^T=PP^T=I$ for any permutation $P$.
The rest of the proof follows a line of analysis similar to the preceding case,
where we exchange the roles of $B(k:t_0)$ and $A(k:t_0)$.

\noindent (c)\quad Let $\Sc$ be a regular sequence. Let $T(k)$ be the image of
$S(k)$ under the permutation $P^T(k:0)$, i.e.\ $T(k)=P^T(k:0)(S(k))$. Note that
$|T(k)|=|S(k)|$ for all $k\geq 0$ and since $\Sc$ is a regular sequence,
it follows that $\{T(k)\}$ is also a regular sequence.
Now, by the definition of rotational transformation we have
$A(k)=P(k+1:0)B(k)P^T(k:0)$, and hence:
\begin{align}\nonumber
\sum_{i\in S(k+1),j\in \bar{S}(k)}e_i^TA(k)e_j
&=\sum_{i\in S(k+1),j\in \bar{S}(k)}e_i^T[P(k+1:0)B(k)P^T(k:0)]e_j\cr
&=\sum_{i\in T(k+1),j\in \bar{T}(k)}e_i^TB(k)e_j.
\end{align}
Similarly, we have $\sum_{i\in \bar{S}(k+1),j\in S(k)}e_i^TA(k)e_j
=\sum_{i\in \bar{T}(k+1),j\in {T}(k)}e_i^TB(k)e_j$. Next, note that
\begin{align*}
A_{S(k+1)S(k)}(k)&=\sum_{i\in S(k+1),j\in \bar{S}(k)}e_i^TA(k)e_j
+\sum_{i\in \bar{S}(k+1),j\in S(k)}e_i^T A(k)e_j,\cr
B_{T(k+1)T(k)}(k)&=\sum_{i\in T(k+1),j\in \bar{T}(k)}e_i^TB(k)e_j
+\sum_{i\in \bar{T}(k+1),j\in T(k)}e_i^T B(k)e_j.
\end{align*}
Hence, we have $A_{S(k+1)S(k)}(k)=B_{T(k+1)T(k)}(k)$, implying
\[F(\Ac;\Sc)=F(\Bc;\{T(k)\}).\]

For the converse, for any regular sequence $\{T(k)\}$, if we let
$S(k)=P(k:0)(T(k))$ and use the same line of argument as in the preceding case,
we can conclude that $\Sc$ is a regular sequence and
$A_{S(k+1)S(k)}(k)=B_{T(k+1)T(k)}(k)$. Therefore,
$F(\Ac;\Sc)=F(\Bc;\{T(k)\})$ implying that $\Ac$ has the
absolute infinite flow property if and only if $\Bc$ has
the absolute infinite flow property.

\noindent (d)\quad
If $\Pc$ is such that $S(k)$ is the image of $S$ under $\Pc$ at any time
$k\geq 0$, by part (c), it follows that
$A_{S(k+1)S(k)}(k)=B_{T(k+1)T(k)}(k)$, where $T(k)=P^T(k:0)(S(k))$.
Since $S(k)$ is the image of $S$ under $\Pc$ at time $k$, it follows that
$P^T(k:0)(S(k))=S$ in view of $P^T(k:0)P(k:0)=I$.
Thus, $T(k+1)=S$ and by part (c) we obtain $A_{S(k+1)S(k)}(k)=B_S(k)$.
\end{proof}

As listed in Lemma~\ref{lemma:rotationproperties},
the rotational transformation has some interesting properties:
it preserves ergodicity and it preserves the absolute infinite flow property,
by interchanging the flows. We will use these properties intensively in the
development in this section and the rest of the paper.
\subsection{Necessity of Absolute Infinite Flow Property}
\label{sec:necessary}
We now establish the necessity of
the absolute infinite flow property for ergodicity of stochastic chains.
The proof of this result relies on Lemma~\ref{lemma:rotationproperties}.

\begin{theorem}\label{thrm:necessity}
The absolute infinite flow property is necessary for ergodicity of
any stochastic chain.
\end{theorem}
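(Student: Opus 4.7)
The plan is to derive the result as a short corollary of Lemma~\ref{lemma:rotationproperties} combined with Theorem~\ref{thrm:necessaryergodicity}, exploiting the fact that a regular sequence is always the trajectory of its initial set under some permutation chain. Concretely, I would fix an ergodic stochastic chain $\Ac$ and an arbitrary regular sequence $\Sc$, and show $F(\Ac;\Sc)=\infty$.

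First, since $|S(k)|=|S(k+1)|$ for every $k$, there exists a permutation matrix $P(k)\in\perm_m$ with $P(k)(S(k))=S(k+1)$; iterating, the associated permutation chain $\Pc$ satisfies $S(k)=P(k-1)\cdots P(0)(S(0))$, i.e., $S(k)$ is the image of $S(0)$ under $\Pc$ at time $k$, as discussed in the paragraph preceding Lemma~\ref{lemma:noperm}. Let $\Bc$ be the rotational transformation of $\Ac$ by this $\Pc$, as given in Definition~\ref{def:rotation}.

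Next, I apply Lemma~\ref{lemma:rotationproperties} in three steps. By part~(a), $\Bc$ is a stochastic chain. By part~(b), the ergodicity of $\Ac$ transfers to $\Bc$, so $\Bc$ is ergodic. Theorem~\ref{thrm:necessaryergodicity} then guarantees that $\Bc$ has the (ordinary) infinite flow property, and in particular
\[\sum_{k=0}^{\infty} B_{S(0)}(k)=\infty.\]
Finally, part~(d) of the same lemma, applied with $S=S(0)$ and the sets $S(k)$ identified as the images of $S(0)$ under $\Pc$, yields the termwise identity
\[A_{S(k+1),S(k)}(k)=B_{S(0)}(k)\qquad\text{for all }k\ge 0.\]
Summing gives $F(\Ac;\Sc)=\sum_{k=0}^{\infty}B_{S(0)}(k)=\infty$. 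Since $\Sc$ was arbitrary, $\Ac$ has the absolute infinite flow property, which is what we needed.

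The only conceptual obstacle is the reduction of a time-varying set sequence to the static situation handled by Theorem~\ref{thrm:necessaryergodicity}; that reduction is precisely the content of the rotational transformation, and parts~(b) and~(d) of Lemma~\ref{lemma:rotationproperties} do essentially all of the work. There are no further calculations to grind through: the argument is a two-line diagram chase once the permutation chain realizing $\Sc$ as a trajectory is named.
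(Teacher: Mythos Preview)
Your proof is correct and follows essentially the same route as the paper's own argument: realize the regular sequence as the trajectory of $S(0)$ under some permutation chain $\Pc$, rotationally transform $\Ac$ by $\Pc$, transfer ergodicity to $\Bc$ via Lemma~\ref{lemma:rotationproperties}\eqref{rot:equi}, invoke Theorem~\ref{thrm:necessaryergodicity} to get the infinite flow property for $\Bc$, and then use Lemma~\ref{lemma:rotationproperties}\eqref{rot:flow} with $S=S(0)$ to translate back. The paper's proof is virtually line-for-line the same.
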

\begin{proof} Let $\Ac$ be an ergodic stochastic chain.
Let $\Sc$ be any regular sequence. Then, there is a permutation sequence
$\Pc$ such that $\Sc$ is the trajectory of the set $S(0)\subset[m]$
under $\Pc$, i.e., $S(k)=P(k:0)(S(0))$ for all $k$, where $P(0:0)=I$.
Let $\Bc$ be the rotational transformation of $\Ac$ by
the permutation sequence $\Pc$. Then, by
Lemma~\ref{lemma:rotationproperties}\eqref{rot:trans}, the chain
$\Bc$ is stochastic. Moreover, by
Lemma~\ref{lemma:rotationproperties}\eqref{rot:equi}, the chain $\Bc$ is
ergodic. Now, by the necessity of the infinite flow property
(Theorem~\ref{thrm:necessaryergodicity}), the chain $\Bc$ should satisfy
\begin{align}\label{eqn:infflowB}
\sum_{k=0}^{\infty}B_{S}(k)=\infty\qquad\hbox{for all nonempty $S\subset[m]$}.
\end{align}
Therefore, 
by Lemma~\ref{lemma:rotationproperties}\eqref{rot:flow},
Eq.~\eqref{eqn:infflowB} with $S=S(0)$ implies
\begin{align*}
\sum_{k=0}^{\infty}A_{S(k+1),{S}(k)}(k)=\infty,
\end{align*}
thus showing that $\Ac$ has the absolute infinite flow property.
\end{proof}

The converse statement of Theorem~\ref{thrm:necessity} is not true generally,
namely the absolute infinite flow need not be sufficient for
the ergodicity of a chain.
We reinforce this statement later in Section~\ref{sec:decomposable}
(Corollary~\ref{cor:counterexample}). Thus, even though
the absolute infinite flow property imposes a lot of structure for
a chain $\Ac$, by requiring that the flow of $\Ac$ over any
regular sequence $\Sc$ be infinite, this is still not enough to guarantee
ergodicity of the chain.
However, as we will soon see, it turns out that this property is sufficient
for ergodicity of the doubly stochastic chains.

\section{Decomposable Stochastic Chains}\label{sec:decomposable}
In this section, we consider a class of stochastic chains, termed decomposable,
for which verifying the absolute infinite flow property can be reduced
to showing that the flows over some specific regular sequences are infinite.
We explore some properties of this class which will be also used in later
sections.

Here is the definition of a decomposable chain.
\begin{definition}\label{def:decomposable}
A chain $\Ac$ is decomposable if $\Ac$ can be represented as a
nontrivial convex combination of a permutation chain $\Pc$ and a stochastic
chain $\Atc$, i.e., there exists a $\gamma>0$ such that
\begin{align}\label{eqn:permutationdecompos}
A(k)=\gamma P(k)+(1-\gamma)\At(k)\qquad \mbox{for all $k\geq 0$}.
\end{align}
We refer to $\Pc$ as a {\it permutation component} of $\Ac$ and to $\gamma$ as
a {\it mixing coefficient} for $\Ac$.
\end{definition}

An example of a decomposable chain is a chain $\Ac$ with uniformly bounded
diagonal entries, i.e., with $A_{ii}(k)\geq \gamma$ for all $k\geq 0$
and some $\gamma>0$. Instances of such chains have been studied
in~\cite{Tsitsiklis84,Jadbabaie03,CaoMora,CarliFagnani06,Krause:02,Boyd06,
alex_cdc08}. In this case, $A(k)=\gamma I +(1-\gamma)\At(k)$ where
$\tilde A(k)=\frac{1}{1-\gamma}(A(k)-\gamma I)$. Note that
$A(k)-\gamma I\geq 0$, where the inequality is to be understood component-wise,
and $(A(k)-\gamma I)e=(1-\gamma)e$, which follows from $A(k)$ being stochastic.
 Therefore, $\At(k)$ is a stochastic matrix for any $k\geq 0$ and
the trivial permutation $\{I\}$ is a permutation component of $\Ac$.
Later, we will show that \textit{any} doubly stochastic chain is decomposable.

An example of a non-decomposable chain is the static chain $\Ac$ defined by
 \[A(k)=A(0)=\left[\begin{array}{cc}
   1&0\\
   1&0
 \end{array}\right]\qquad \mbox{for all $k\geq 0$}.\]
This chain is not decomposable since otherwise, we should have
$A(0)=\gamma P(0)+(1-\gamma)\At(0)$ for some permutation matrix $P(0)$.
Since the second column of $A(0)$ is the zero vector, the second column of
$P(0)$ should also be zero which contradicts
the fact that $P(0)$ is a permutation matrix.

We have some side remarks about decomposable chains. The first remark is
an observation that a permutation component of a decomposable chain $\Ac$ need
not to be unique. An extreme example is the chain $\Ac$ with
$A(k)=\frac{1}{m}ee^T$ for all $k\geq 0$. Since
$\frac{1}{m}ee^T=\frac{1}{m!}\sum_{\xi=1}^{m!}\Qi$,
any sequence of permutation matrices is a permutation component of $\Ac$.
Another remark is about a mixing coefficient $\gamma$ of a chain $\Ac$.
Note that if $\gamma>0$ is a mixing
coefficient for a chain $\Ac$, then any $\xi\in(0,\gamma]$
is also a mixing coefficient for $\Ac$, as it can be seen from
the decomposition in Eq.~\eqref{eqn:permutationdecompos}.

An interesting property of any decomposable chain is that if
such a chain is rotationally transformed with respect to its
permutation component,
the resulting chain has trivial permutation component $\{I\}$. This property
is established in the following lemma.
\begin{lemma}\label{lemma:rotationdecomposable}
Let $\Ac$ be a decomposable chain with a permutation component $\Pc$ and
a mixing coefficient $\gamma$. Let $\Bc$ be the rotational transformation of
$\Ac$ with respect to $\Pc$.
Then, the chain $\Bc$ is decomposable with a trivial permutation component
$\{I\}$ and a mixing coefficient $\gamma$.
\end{lemma}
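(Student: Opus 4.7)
The plan is to substitute the decomposition of $A(k)$ directly into the definition of $B(k)$ and use the telescoping identity for partial products of permutations to show that the permutation piece collapses to the identity.

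First I would write
\[
B(k)=P^T(k+1:0)A(k)P(k:0)=\gamma\,P^T(k+1:0)P(k)P(k:0)+(1-\gamma)\,P^T(k+1:0)\At(k)P(k:0).
\]
Next I would handle the coefficient of $\gamma$. Using the convention $P(k+1:0)=P(k)P(k-1)\cdots P(0)=P(k)P(k:0)$, we have $P^T(k+1:0)=P^T(k:0)P^T(k)$, so
\[
P^T(k+1:0)P(k)P(k:0)=P^T(k:0)\bigl(P^T(k)P(k)\bigr)P(k:0)=P^T(k:0)P(k:0)=I,
\]
where we use that $P^TP=I$ for any permutation matrix $P$ and that a product of permutation matrices is again a permutation matrix. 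Thus
\[
B(k)=\gamma I+(1-\gamma)\,\tilde B(k),\qquad \tilde B(k):=P^T(k+1:0)\At(k)P(k:0).
\]

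It then remains to verify that $\{\tilde B(k)\}$ is a stochastic chain. But $\{\tilde B(k)\}$ is precisely the rotational transformation of the stochastic chain $\Atc$ by $\Pc$, so Lemma~\ref{lemma:rotationproperties}\eqref{rot:trans} applies directly and gives that $\tilde B(k)$ is stochastic for every $k$. Consequently, $\Bc$ admits the decomposition required by Definition~\ref{def:decomposable} with permutation component $\{I\}$ and mixing coefficient $\gamma$, which is exactly the claim of the lemma.

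There is essentially no obstacle here; the entire content of the proof is the telescoping identity $P^T(k+1:0)P(k)P(k:0)=I$, which reflects the purpose of the rotational transformation, namely that conjugating by the partial products of a permutation chain cancels that chain out of any summand proportional to the permutation itself.
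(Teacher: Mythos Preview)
Your proof is correct and follows essentially the same route as the paper: both substitute the decomposition of $A(k)$ into the definition of $B(k)$, use the telescoping identity $P(k)P(k:0)=P(k+1:0)$ together with $P^TP=I$ to collapse the permutation term to $I$, and then observe that $\tilde B(k)=P^T(k+1:0)\At(k)P(k:0)$ is stochastic. The only cosmetic difference is that the paper writes the collapse as $P^T(k+1:0)P(k+1:0)=I$ directly, whereas you expand $P^T(k+1:0)=P^T(k:0)P^T(k)$ first; and you cite Lemma~\ref{lemma:rotationproperties}\eqref{rot:trans} for the stochasticity of $\tilde B(k)$ while the paper simply asserts it.
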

\begin{proof}
Note that by the definition of a decomposable chain
(Definition~\ref{def:decomposable}), we have
\[A(k)=\gamma P(k)+(1-\gamma)\At(k)\qquad\mbox{for all $k\geq 0$},\]
where $P(k)$ is a permutation matrix and $\At(k)$ is a stochastic matrix.
Therefore,
\[A(k)P(k:0)=\gamma P(k)P(k:0)+(1-\gamma)\At(k) P(k:0).\]
By noticing that $P(k)P(k:0)=P(k+1:0)$ and by using left-multiplication with
$P^T(k+1:0)$, we obtain
\[P^T(k+1:0)A(k)P(k:0)
=\gamma P^T(k+1:0)P(k+1:0)+(1-\gamma)P^T(k+1:0)\At(k) P(k:0).\]
By the definition of the rotational transformation
(Definition~\ref{def:rotation}), we have
$B(k)=P^T(k+1:0)A(k)P(k:0)$. Using this and the fact $P^TP=I$
for any permutation matrix $P$, we further have
\[B(k)=\gamma I +(1-\gamma)P^T(k+1:0)\At(k) P(k:0).\]
Define $\tilde{B}(k)=P^T(k+1:0)\At(k)P(k:0)$ and note that
each $\tilde B(k)$  is a stochastic matrix.  Hence,
\begin{align*}
B(k)=\gamma I+(1-\gamma)\tilde{B}(k),
\end{align*}
thus showing that the chain $\{B(k)\}$ is decomposable with
the trivial permutation component and a mixing coefficient $\gamma$.
\end{proof}

In the next lemma, we prove that the absolute infinite flow property and
the infinite flow property are one and the same for decomposable chains
with a trivial permutation component.
\begin{lemma}\label{lemma:equiinfiniteflows}
For a decomposable chain with a trivial permutation component,
the infinite flow property and the absolute infinite flow property
are equivalent.
\end{lemma}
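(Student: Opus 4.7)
The plan is to show the nontrivial direction: infinite flow implies absolute infinite flow for a decomposable chain $\Ac$ with trivial permutation component. (The other direction is immediate, since taking a static regular sequence $S(k)\equiv S$ in Definition~\ref{def:absoluteinfflow} recovers Definition~\ref{def:infiniteflow}, as was already observed just after Definition~\ref{def:absoluteinfflow}.) So assume $\Ac$ has the infinite flow property and write $A(k)=\gamma I + (1-\gamma)\At(k)$ with $\At(k)$ stochastic and $\gamma>0$. The key structural consequence of triviality of the permutation component is the uniform diagonal lower bound $A_{ii}(k)\geq \gamma$ for all $i\in[m]$ and $k\geq 0$.

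Fix an arbitrary regular sequence $\Sc$ and split the time axis into two disjoint subsets: $K_{=}=\{k\geq 0 \mid S(k+1)=S(k)\}$ and $K_{\neq}=\{k\geq 0 \mid S(k+1)\neq S(k)\}$. The first step is the observation that for $k\in K_{\neq}$ there must exist some index $i\in S(k+1)\setminus S(k)$ (this uses $|S(k+1)|=|S(k)|$ from regularity). For such an $i$, the diagonal entry $A_{ii}(k)$ appears in the first sum defining $A_{S(k+1),S(k)}(k)$ in Eq.~\eqref{eqn:flow} (taking the pair $(i,i)$ with $i\in S(k+1)$ and $i\in\bar S(k)$), giving $A_{S(k+1),S(k)}(k)\geq A_{ii}(k)\geq \gamma$.

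Now split into two cases. If $|K_{\neq}|=\infty$, then $F(\Ac;\Sc)\geq \sum_{k\in K_{\neq}} \gamma =\infty$ and we are done. Otherwise $K_{\neq}$ is finite, so $\Sc$ is eventually constant: there exists $K\geq 0$ and $S\subset[m]$ nonempty with $S(k)=S$ for all $k\geq K$. For such $k$ the expression in Eq.~\eqref{eqn:flow} reduces exactly to $A_S(k)$, and hence
\[F(\Ac;\Sc)\geq \sum_{k=K}^{\infty} A_{S(k+1),S(k)}(k)=\sum_{k=K}^{\infty} A_S(k)=\infty,\]
where the last equality uses the infinite flow property applied to the nonempty set $S\subset[m]$ (only finitely many terms of the divergent series $\sum_{k=0}^{\infty} A_S(k)=\infty$ are removed). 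Combining the two cases shows $F(\Ac;\Sc)=\infty$ for every regular $\Sc$, which is the absolute infinite flow property.

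The only real content in this argument is the diagonal lower bound coming from the trivial permutation component, and the combinatorial fact that $|S(k+1)|=|S(k)|$ together with $S(k+1)\neq S(k)$ forces a common index $i$ to switch membership, exposing a diagonal entry of $A(k)$ in the flow. There is no anticipated obstacle beyond making this case split precise; no appeal to the rotational transformation machinery of Section~\ref{sec:absolute} is needed here, although that machinery (in particular Lemma~\ref{lemma:rotationdecomposable}) is what will allow this equivalence to be transferred later to \emph{every} decomposable chain after rotating to the trivial permutation component.
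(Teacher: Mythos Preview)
Your proof is correct and follows essentially the same approach as the paper: both split according to whether the regular sequence $\Sc$ eventually stabilizes, invoke the infinite flow property on the limiting set in the stable case, and in the unstable case use the uniform diagonal lower bound $A_{ii}(k)\geq\gamma$ (from the trivial permutation component) on an index that changes membership. The only cosmetic difference is that you pick $i\in S(k+1)\setminus S(k)$ while the paper picks $i\in S(k)\setminus S(k+1)$, which simply exposes the diagonal entry $A_{ii}(k)$ through the other summand in Eq.~\eqref{eqn:flow}.
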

\begin{proof}
By definition, the absolute infinite flow property implies the infinite flow
property for any stochastic chain. For the reverse implication, let $\Ac$ be
decomposable with a permutation component $\{I\}$. Also, assume that $\Ac$ has
the infinite flow property.
To show that $\Ac$ has the absolute infinite flow property,
let $\Sc$ be any regular sequence.
If $S(k)$ is constant after some time $t_0$, i.e., $S(k)=S(t_0)$ for
$k\geq t_0$ and some $t_0\ge0$, then
\begin{align*}
\sum_{k=t_0}^{\infty}A_{S(k+1),{S}(k)}(k)
=\sum_{k=t_0}^{\infty}A_{S(t_0)}(k) 
=\infty,
\end{align*}
where the last equality holds since $\Ac$ has infinite flow property
and $\sum_{k=0}^{t_0}A_{S(t_0)}(k)$ is finite.
Therefore, if $S(k)=S(t_0)$ for $k\geq t_0$, then we must have
$\sum_{k=0}^{\infty}A_{S(k+1),{S}(k)}(k)=\infty$.

If there is no $t_0\ge0$ with $S(k)=S(t_0)$ for $k\geq t_0$, then
we must have $S(k_r+1)\not=S(k_r)$ for an increasing time sequence $\{k_r\}$.
Now, for an $i\in S(k_r)\setminus S(k_r+1)\not=\emptyset$, we have
$A_{S(k_r+1),{S}(k_r)}(k_r)\geq A_{ii}(k_r)$ since $i\in \bar{S}(k_r+1)$.
Furthermore, $A_{ii}(k)\geq \gamma$ for all $k$
since $\Ac$ has the trivial permutation sequence $\{I\}$ as a permutation
component with a mixing coefficient $\gamma$. Therefore,
\begin{align*}
\sum_{k=0}^{\infty}A_{S(k+1),{S}(k)}(k)
&\geq \sum_{r=0}^{\infty}A_{S(k_r+1),{S}(k_r)}(k_r)
\geq \gamma\sum_{r=0}^{\infty}1=\infty.
\end{align*}
All in all, $F(\Ac,\Sc)=\infty$ for any regular sequence $\Sc$ and, hence,
the chain $\Ac$ has the absolute infinite flow property.
\end{proof}

Lemma~\ref{lemma:equiinfiniteflows} shows that the absolute infinite flow
property may be easier to verify for the chains with a trivial permutation
component, by just checking the infinite flow property. This result,
together with Lemma~\ref{lemma:rotationdecomposable} and the properties of
rotational transformation established in Lemma~\ref{lemma:rotationproperties},
provide a basis to show that a similar reduction of the absolute infinite flow
property is possible for any decomposable chain.
\begin{theorem}\label{thm:reductionabsoluteflow}
Let $\Ac$ be a decomposable chain with a permutation component $\Pc$. Then,
the chain $\Ac$ has the absolute infinite flow property if and only if
$F(\Ac;\Sc)=\infty$ for any trajectory $\Sc$ under $\Pc$, i.e., for all
$S(0)\subset[m]$ and its trajectory $\Sc$ under $\Pc$.
\end{theorem}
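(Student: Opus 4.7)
The forward direction is immediate from the definition: if $\Ac$ has the absolute infinite flow property, then $F(\Ac;\Sc)=\infty$ for every regular sequence $\Sc$, and in particular for every trajectory under $\Pc$ (since a trajectory under any permutation sequence is regular).

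The reverse direction is where the work lies, and the plan is to route everything through the rotational transformation. Let $\Bc$ be the rotational transformation of $\Ac$ with respect to $\Pc$. By Lemma~\ref{lemma:rotationdecomposable}, $\Bc$ is decomposable with the trivial permutation component $\{I\}$ (and the same mixing coefficient $\gamma$). This is the crucial reduction: it moves us into the setting covered by Lemma~\ref{lemma:equiinfiniteflows}, where the infinite flow property and the absolute infinite flow property coincide.

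Next I would use Lemma~\ref{lemma:rotationproperties}\eqref{rot:flow} to translate the hypothesis. That lemma gives $A_{S(k+1),S(k)}(k)=B_S(k)$ whenever $S(k)=P(k:0)(S)$ is the trajectory of $S$ under $\Pc$. Summing over $k$, the assumption $F(\Ac;\Sc)=\infty$ for every trajectory $\Sc$ under $\Pc$ is exactly the statement $\sum_{k=0}^{\infty}B_S(k)=\infty$ for every nonempty $S\subset[m]$, i.e., $\Bc$ has the infinite flow property.

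Now I invoke Lemma~\ref{lemma:equiinfiniteflows} on $\Bc$ (valid since $\Bc$ has trivial permutation component) to upgrade this to the absolute infinite flow property for $\Bc$. Finally, Lemma~\ref{lemma:rotationproperties}\eqref{rot:absflow} transfers the absolute infinite flow property back from $\Bc$ to $\Ac$, completing the proof. The main potential obstacle is purely bookkeeping, namely making sure the trajectory indexing is consistent between the statement of part~\eqref{rot:flow} (which fixes a single $S$ and tracks $S(k)=P(k:0)(S)$) and the definition of a trajectory given just before Lemma~\ref{lemma:noperm}; this is only a matter of choosing the permutation sequence $\Pc$ and initial set $S(0)$ correctly, after which the flow identity $A_{S(k+1),S(k)}(k)=B_{S(0)}(k)$ makes the translation mechanical.
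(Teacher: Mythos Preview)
Your proposal is correct and follows essentially the same route as the paper: rotate by $\Pc$ to obtain $\Bc$ with trivial permutation component (Lemma~\ref{lemma:rotationdecomposable}), identify the trajectory hypothesis on $\Ac$ with the infinite flow property of $\Bc$ via Lemma~\ref{lemma:rotationproperties}\eqref{rot:flow}, upgrade to absolute infinite flow for $\Bc$ via Lemma~\ref{lemma:equiinfiniteflows}, and transfer back. If anything, your write-up is slightly more explicit than the paper's, since you spell out the final transfer step using Lemma~\ref{lemma:rotationproperties}\eqref{rot:absflow}, which the paper leaves implicit.
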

\begin{proof}
Since, by the definition, the absolute infinite flow property implies
$F(\Ac;\Sc)=\infty$ for any regular sequence $\Sc$, it suffice to show that
$F(\Ac;\Sc)=\infty$ for any trajectory $\Sc$ under $\Pc$. To show this,
let $\Bc$ be the rotational
transformation of $\Ac$ with respect to $\Pc$. Since $\Ac$ is decomposable, by
Lemma~\ref{lemma:rotationdecomposable}, it follows that $\Bc$ has the trivial
permutation component $\{I\}$. Therefore, by
Lemma~\ref{lemma:equiinfiniteflows} $\Bc$ has
the absolute infinite flow property
if and only if it has the infinite flow property, i.e.,
\begin{align}\label{eqn:flowBk}
\sum_{k=0}^\infty B_{S}(k)=\infty\qquad\hbox{for all nonempty $S\subset[m]$}.
\end{align}
By Lemma~\ref{lemma:rotationproperties}\ref{rot:flow}, we have
$B_S(k)=A_{S(k+1),S(k)}(k)$, where $S(k)$ is the image of
$S(0)=S$ under the permutation $\Pc$ at time $k$. Therefore,
Eq.~\eqref{eqn:flowBk} holds if and only if
\[\sum_{k=0}^\infty A_{S(k+1)S(k)}(k)=\infty,\]
which in view of $F(\Ac;\Sc)=\sum_{k=0}^\infty A_{S(k+1)S(k)}(k)$ shows
that $F(\Ac;\Sc)=\infty$.
\end{proof}


Another direct consequence of Lemma~\ref{lemma:equiinfiniteflows} is that
the absolute infinite flow property is not generally sufficient for ergodicity.

\begin{corollary}\label{cor:counterexample}
The absolute infinite flow property is not a sufficient condition for
ergodicity.
\end{corollary}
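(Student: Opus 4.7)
The strategy is to exhibit a single counterexample, using Lemma~\ref{lemma:equiinfiniteflows} as a shortcut.

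By Lemma~\ref{lemma:equiinfiniteflows}, the absolute infinite flow property coincides with the ordinary infinite flow property on the class of decomposable chains with trivial permutation component $\{I\}$. Any stochastic chain $\Ac$ whose diagonal entries satisfy $A_{ii}(k)\ge\gamma>0$ uniformly in $i$ and $k$ belongs to this class, since it decomposes as $A(k)=\gamma I+(1-\gamma)\tilde A(k)$ with $\tilde A(k)$ stochastic. So it suffices to produce a stochastic chain with uniformly positive diagonal entries that has the infinite flow property yet is not ergodic.

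Such a chain necessarily has $m\ge 3$ agents. Indeed, when $m=2$ and the diagonal is uniformly bounded below by $\gamma$, one has $\beta(k):=A_{12}(k)+A_{21}(k)\le 2-2\gamma$, and the opinion gap evolves as $x_1(k+1)-x_2(k+1)=(1-\beta(k))(x_1(k)-x_2(k))$; a direct check gives $1-|1-\beta(k)|\ge \min(\beta(k),2\gamma)$, and so $\sum_k\beta(k)=\infty$ already forces $\prod_k|1-\beta(k)|\to 0$ and hence ergodicity. For $m=3$ I would instead schedule pairwise off-diagonal interactions in geometrically growing time blocks, with per-step weights tuned so that (i) the cumulative mass crossing every nontrivial cut $S$ diverges, yielding the infinite flow property, while (ii) the per-block multiplicative contraction of the opinion profile's spread is summable, so that a positive asymptotic gap between two of the agents persists for a suitable initial profile. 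Such three-agent constructions, calibrated so that flow is created by many small interactions that do not, collectively, force synchronization, are standard and already appear in the authors' prior paper \cite{ErgodicityPaper}.

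Granted such a chain $\Ac$, the corollary follows at once: $\Ac$ is decomposable with trivial permutation component, so by Lemma~\ref{lemma:equiinfiniteflows} its infinite flow property is equivalent to its absolute infinite flow property, while by design $\Ac$ is not ergodic. The main obstacle in carrying out this plan is the calibration in the middle paragraph, namely choosing block sizes and per-step weights so that (i) and (ii) hold simultaneously; this is a delicate but purely computational step, and is the only nontrivial content of the argument. Everything else is a direct invocation of Lemma~\ref{lemma:equiinfiniteflows}.
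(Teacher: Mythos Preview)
Your reduction via Lemma~\ref{lemma:equiinfiniteflows} is exactly the paper's approach: find a stochastic chain with uniformly positive diagonal (hence decomposable with trivial permutation component) that has the infinite flow property but is not ergodic. The gap is that you never actually produce such a chain. You sketch a time-varying block construction, call the calibration ``delicate but purely computational,'' and defer to \cite{ErgodicityPaper}; but a proof of a corollary asserting existence of a counterexample must exhibit one, and as written yours does not.

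More importantly, the construction you outline is far more elaborate than necessary. The paper dispenses with the whole issue by taking the \emph{static} chain
\[
A(k)=\begin{pmatrix}1&0&0\\ \tfrac13&\tfrac13&\tfrac13\\ 0&0&1\end{pmatrix}
\qquad\text{for all }k\ge0.
\]
The diagonal is bounded below by $\tfrac13$, so the chain has trivial permutation component; a one-line check on each nontrivial $S\subset\{1,2,3\}$ gives $A_S(k)\ge\tfrac13$, so the infinite flow property holds; and $v=(1,\tfrac12,0)^T$ satisfies $A(k)v=v$, so the dynamics started at $v$ never reaches consensus and the chain is not ergodic. No block scheduling, no weight calibration, no summability estimate is needed: a non-consensus fixed point of a single matrix kills ergodicity outright. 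Your $m=2$ impossibility discussion is correct but irrelevant once one realizes that a static $3\times 3$ example already works.
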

\begin{proof}
Consider the following static chain:
\[A(k)=\left[\begin{array}{ccc}
1&0&0\\ \frac{1}{3}&\frac{1}{3}&\frac{1}{3}\\ 0&0&1
\end{array}\right]\qquad\mbox{for $k\geq 0$}.\]
It can be seen that $\Ac$ has the infinite flow property. Furthermore, it can
be seen that $\Ac$ is decomposable and has  the trivial permutation sequence
$\{I\}$ as a permutation component. Thus, by
Lemma~\ref{lemma:equiinfiniteflows}, the chain $\Ac$ has
the absolute infinite flow property. However, $\Ac$ is not ergodic.
This can be seen by noticing that the vector $v=(1,\frac{1}{2},0)^T$ is a
fixed point of the dynamics $x(k+1)=A(k)x(k)$ with $x(0)=v$, i.e.,
$v=A(k)v$ for any $k\geq 0$. Hence, $\Ac$ is not ergodic.
\end{proof}

Although the absolute infinite flow property is a stronger necessary condition
for ergodicity than the infinite flow property,
Corollary~\ref{cor:counterexample} demonstrates that
the absolute infinite flow property is not yet strong enough to be equivalent
to ergodicity. 

\section{Doubly Stochastic Chains}\label{sec:doublystoch}
In this section, we focus on the class of the doubly stochastic chains.
We first show that this class is a subclass of the decomposable chains.
Using this result and the results developed in the preceding sections, we
establish that the absolute infinite flow property is equivalent to ergodicity
for doubly stochastic chains.

We start our development by proving that a doubly stochastic chain is
decomposable. The key ingredient in this development is the Birkhoff-von
Neumann theorem (\cite{HornJohnson} page 527) stating that a doubly stochastic
matrix is a convex combination of permutation matrices.
More precisely, by the Birkhoff-von Neumann theorem,
a matrix $A$ is doubly stochastic if and only if
$A$ is a convex combination of permutation matrices, i.e.,
\[A=\sum_{\xi=1}^{m!}q_{\xi}\Qi,\]
where $\sum_{\xi=1}^{m!}q_{\xi}=1$ and $q_{\xi}\geq 0$ for $\xi\in[m!]$.
Here, we assume that all the permutation matrices are indexed in some manner
(see notation in Section~\ref{sec:introduction}).

Now, consider a sequence $\Ac$ of doubly stochastic matrices.
By applying the Birkhoff-von Neumann theorem to each $A(k)$, we have
\begin{align}\label{eqn:original2}
A(k)=\sum_{\xi=1}^{m!}q_{\xi}(k)\Qi,
\end{align}
where $\sum_{\xi=1}^{m!}q_{\xi}(k)=1$ and $q_{\xi}(k)\geq 0$ for all
$\xi\in[m!]$ and $k\geq 0$.
Since $\sum_{\xi=1}^{m!}q_{\xi}(k)=1$ and $q_{\xi}(k)\geq 0$,
there exists a scalar $\gamma\ge \frac{1}{m!}$ such that
for every $k\geq 0$, we can find $\xi(k)\in[m!]$ satisfying
$q_{\xi(k)}(k)\geq \gamma$.
Therefore, for any time $k\geq 0$, there is a permutation matrix
$P(k)=P^{(\xi(k))}$ such that
\begin{align}\label{eqn:originaldoubly}
A(k)=\gamma P(k)+\sum_{\xi=1}^{m!}\al_{\xi}(k)\Qi=\gamma P(k)+(1-\gamma)\At(k),
\end{align}
where $\gamma>0$ is a \textit{time-independent} scalar and
$\At(k)=\frac{1}{1-\gamma}\sum_{\xi=1}^{m!}\al_{\xi}(k)\Qi$.

The decomposition of $A(k)$ in Eq.~\eqref{eqn:originaldoubly}
fits the description in the definition of decomposable chains.
Therefore, we have established the following result.
\begin{lemma}\label{lemma:doublystochsticdecom}
Any doubly stochastic chain is decomposable.
\end{lemma}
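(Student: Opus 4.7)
The plan is to invoke the Birkhoff--von Neumann theorem pointwise in $k$ and then use a pigeonhole argument to extract a \emph{time-independent} mixing coefficient. Applied to each doubly stochastic matrix $A(k)$, Birkhoff--von Neumann yields a representation
\[A(k)=\sum_{\xi=1}^{m!}q_\xi(k)\,P^{(\xi)}\]
with $q_\xi(k)\ge 0$ and $\sum_{\xi=1}^{m!}q_\xi(k)=1$. The fixed enumeration $\{P^{(\xi)}\}_{\xi=1}^{m!}$ of permutation matrices is what lets us compare coefficients across different times $k$.

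Next, I would argue that at every $k$ there exists an index $\xi(k)\in[m!]$ with $q_{\xi(k)}(k)\ge 1/m!$: since the $m!$ nonnegative coefficients $q_\xi(k)$ sum to $1$, their maximum is at least $1/m!$, so such an index exists by pigeonhole. Setting $\gamma:=1/m!$ (which is strictly positive and does not depend on $k$) and $P(k):=P^{(\xi(k))}$, I would then write
\[A(k)=\gamma P(k)+\bigl(A(k)-\gamma P(k)\bigr)=\gamma P(k)+(1-\gamma)\tilde A(k),\]
where $\tilde A(k):=\tfrac{1}{1-\gamma}\bigl(A(k)-\gamma P(k)\bigr)$.

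The remaining step is to verify that $\tilde A(k)$ is itself stochastic, so that the decomposition matches Definition~\ref{def:decomposable}. Nonnegativity of $\tilde A(k)$ follows because
\[A(k)-\gamma P(k)=(q_{\xi(k)}(k)-\gamma)P^{(\xi(k))}+\sum_{\xi\ne \xi(k)}q_\xi(k)P^{(\xi)}\]
is a nonnegative combination of permutation matrices by the choice of $\gamma$; rows summing to $1$ follow from $(A(k)-\gamma P(k))e=(1-\gamma)e$ using that both $A(k)$ and $P(k)$ are row-stochastic, and dividing by $1-\gamma$ gives $\tilde A(k)\,e=e$. Since $\{P(k)\}$ is a permutation chain and $\{\tilde A(k)\}$ is stochastic, the decomposition exhibits $\{A(k)\}$ as decomposable with permutation component $\{P(k)\}$ and mixing coefficient $\gamma=1/m!$.

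I do not anticipate a serious obstacle: the one subtle point is that Birkhoff--von Neumann is only a pointwise statement, so naively one might get a mixing coefficient $\gamma(k)$ that depends on $k$ and could degrade to $0$. The pigeonhole observation that $\max_\xi q_\xi(k)\ge 1/m!$ uniformly in $k$ is precisely what supplies the time-independent lower bound required by Definition~\ref{def:decomposable}.
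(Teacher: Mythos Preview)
Your argument is correct and mirrors the paper's proof essentially step for step: apply Birkhoff--von Neumann pointwise, use the pigeonhole bound $\max_\xi q_\xi(k)\ge 1/m!$ to extract a time-independent $\gamma$, and peel off $\gamma P(k)$ to leave a stochastic remainder. The only cosmetic difference is that you fix $\gamma=1/m!$ explicitly and spell out the verification that $\tilde A(k)$ is stochastic, whereas the paper leaves this implicit in the convex-combination form.
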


In the light of Lemma~\ref{lemma:doublystochsticdecom},
all the results developed in Section~\ref{sec:decomposable} are applicable to
doubly stochastic chains. Another result that we use is the special instance of
Theorem~6 in~\cite{ErgodicityPaper} as applied to doubly stochastic chains.
Any doubly stochastic chain that has the trivial permutation component $\Ic$
(i.e., Eq.~\eqref{eqn:originaldoubly} holds with $P(k)=I$) fits the framework
of Theorem~6 in~\cite{ErgodicityPaper}.
We restate this theorem for convenience.
\begin{theorem}\label{thrm:ergodicityfeedback}
Let $\Ac$ be a doubly stochastic chain with a permutation component $\{I\}$.
Then, the chain $\Ac$ is ergodic if and only if it has
the infinite flow property.
\end{theorem}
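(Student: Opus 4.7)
The plan is to verify that this theorem falls directly under the scope of Theorem~6 in \cite{ErgodicityPaper}, so the bulk of the work is checking that the hypotheses of that earlier result are met. Necessity is immediate: by Theorem~\ref{thrm:necessaryergodicity}, the infinite flow property is necessary for the ergodicity of \emph{any} stochastic chain, so in particular for doubly stochastic chains with trivial permutation component.

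For the sufficiency direction, the strategy is to check two structural properties of $\Ac$. First, since $\Ac$ is decomposable with trivial permutation component $\{I\}$ and mixing coefficient $\gamma > 0$, the decomposition $A(k) = \gamma I + (1-\gamma)\tilde{A}(k)$ gives the uniform lower bound $A_{ii}(k) \geq \gamma$ for all $i \in [m]$ and all $k \geq 0$. This supplies the uniformly bounded positive diagonal entries that the prior ergodicity result requires. Second, I need the feedback-type symmetry condition used in Theorem~6 of \cite{ErgodicityPaper}. This is where double stochasticity plays its crucial role: the identities $\sum_{j} A_{ij}(k) = 1$ and $\sum_{j} A_{ji}(k) = 1$ imply the symmetric ``weight exchange'' required by the feedback hypothesis, since incoming and outgoing weights at every vertex of the trellis graph balance at each time step.

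Given these two inputs plus the standing assumption of the infinite flow property, Theorem~6 of \cite{ErgodicityPaper} produces ergodicity of $\Ac$ directly, concluding the argument. The main obstacle, such as it is, is purely bookkeeping: translating between the vocabulary of the present paper (decomposability, trivial permutation component) and the vocabulary of \cite{ErgodicityPaper} (uniformly positive diagonals, feedback condition). Once the translation is made, no new analytic estimates are required.

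If one wished to present a self-contained argument rather than invoking the prior result, the natural path would be to exploit double stochasticity to construct a quadratic Lyapunov-type function, for instance the variance $V(x(k)) = \|x(k) - \bar{x}\,e\|^2$ where $\bar{x} = \tfrac{1}{m}\sum_i x_i(0)$ is preserved by left-multiplication with a doubly stochastic matrix. One would then show that the infinite flow condition, combined with the uniform diagonal lower bound $\gamma$, forces $V$ to decrease by a definite amount whenever a nontrivial cross-cut $A_S(k)$ is large, yielding $V(x(k)) \to 0$. That route is longer, however, and does not take advantage of the already-established tool in \cite{ErgodicityPaper}.
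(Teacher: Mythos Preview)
Your proposal is correct and matches the paper's own treatment exactly: the paper does not give an independent proof of this theorem but simply observes that a doubly stochastic chain with trivial permutation component ``fits the framework of Theorem~6 in~\cite{ErgodicityPaper}'' and restates that result. Your write-up goes a bit further by explicitly checking the two hypotheses (the uniform diagonal lower bound $A_{ii}(k)\ge\gamma$ from the decomposition $A(k)=\gamma I+(1-\gamma)\tilde A(k)$, and the feedback/balance condition from double stochasticity), which is a helpful elaboration but not a different approach.
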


Now, we are ready to deliver our main result of this section,
showing that the ergodicity and the absolute infinite flow are equivalent
for doubly stochastic chains. We accomplish this by combining
Theorem~\ref{thm:reductionabsoluteflow} and
Theorem~\ref{thrm:ergodicityfeedback}.

\begin{theorem}\label{thrm:infflowdoubly}
A doubly stochastic chain $\Ac$ is ergodic if and only if it has the absolute
infinite flow property.
\end{theorem}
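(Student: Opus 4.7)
The necessity direction is essentially free: since a doubly stochastic chain is in particular a stochastic chain, Theorem~\ref{thrm:necessity} immediately gives that ergodicity implies the absolute infinite flow property. So the content is all in the converse, and my plan is to reduce the sufficiency to the already available result for doubly stochastic chains with trivial permutation component, namely Theorem~\ref{thrm:ergodicityfeedback}, by rotating the chain.

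Suppose $\Ac$ is doubly stochastic and has the absolute infinite flow property. First I would invoke Lemma~\ref{lemma:doublystochsticdecom} to obtain that $\Ac$ is decomposable, with some permutation component $\Pc$ and mixing coefficient $\gamma>0$. Then I would form the rotational transformation $\Bc$ of $\Ac$ by $\Pc$, i.e., $B(k)=P^T(k+1:0)A(k)P(k:0)$. Three properties of $\Bc$ are needed: (i) $\Bc$ is doubly stochastic, which is immediate because $B(k)$ is a product of doubly stochastic matrices (each $P(k:0)$ is a permutation, hence doubly stochastic); (ii) $\Bc$ is decomposable with the trivial permutation component $\{I\}$, which is exactly Lemma~\ref{lemma:rotationdecomposable}; and (iii) $\Bc$ inherits the absolute infinite flow property from $\Ac$, which is Lemma~\ref{lemma:rotationproperties}\eqref{rot:absflow}.

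Once $\Bc$ is doubly stochastic with trivial permutation component $\{I\}$ and has the absolute infinite flow property, Lemma~\ref{lemma:equiinfiniteflows} converts the absolute infinite flow property into the ordinary infinite flow property for $\Bc$. At that point Theorem~\ref{thrm:ergodicityfeedback} applies directly and yields that $\Bc$ is ergodic. Finally, Lemma~\ref{lemma:rotationproperties}\eqref{rot:equi} transfers ergodicity back from $\Bc$ to $\Ac$, completing the sufficiency direction.

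There is no real obstacle in this plan; the only point worth checking with care is that the rotated chain $\Bc$ is still doubly stochastic, so that Theorem~\ref{thrm:ergodicityfeedback} is truly applicable. This is immediate from the fact that the class of doubly stochastic matrices is closed under multiplication and that permutation matrices are doubly stochastic. Everything else is a clean concatenation of Theorem~\ref{thrm:necessity}, Lemma~\ref{lemma:doublystochsticdecom}, Lemma~\ref{lemma:rotationdecomposable}, Lemma~\ref{lemma:rotationproperties}, Lemma~\ref{lemma:equiinfiniteflows}, and Theorem~\ref{thrm:ergodicityfeedback}.
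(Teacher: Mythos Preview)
Your proposal is correct and follows essentially the same approach as the paper: rotate by a permutation component to obtain a doubly stochastic chain $\Bc$ with trivial permutation component, apply Theorem~\ref{thrm:ergodicityfeedback}, and transfer ergodicity back via Lemma~\ref{lemma:rotationproperties}\eqref{rot:equi}. The only cosmetic difference is that the paper handles both directions simultaneously through the equivalence chain and uses Lemma~\ref{lemma:rotationproperties}\eqref{rot:flow} directly to link $\Bc$'s infinite flow with $\Ac$'s absolute infinite flow, whereas you separate necessity via Theorem~\ref{thrm:necessity} and route through Lemma~\ref{lemma:rotationproperties}\eqref{rot:absflow} plus Lemma~\ref{lemma:equiinfiniteflows}; these are interchangeable paths through the same lemmas.
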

\begin{proof}
Let $\Pc$ be a permutation component for $\Ac$ and let $\Bc$ be the rotational
transformation of $\Ac$ with respect to its permutation component.
By Lemma~\ref{lemma:rotationdecomposable}, $\Bc$ has the trivial permutation
component $\Ic$. Moreover, since $B(k)=P^T(k+1:0)A(k)P(k:0)$, where
$P^T(k+1:0)$, $A(k)$ and $P(k:0)$ are doubly stochastic matrices, it follows
that $\Bc$ is a doubly stochastic chain. Therefore, by
Theorem~\ref{thrm:ergodicityfeedback}, it follows that $\Bc$ is ergodic if
and only if it has the infinite flow property.
Then, by Lemma~\ref{lemma:rotationproperties}\eqref{rot:flow},
the chain $\Bc$ has the infinite flow property if and only if $\Ac$ has
the absolute infinite flow property.
\end{proof}

Theorem~\ref{thrm:infflowdoubly} provides an \textit{alternative
characterization} of ergodicity for doubly stochastic chains, under only
requirement to have the absolute infinite flow property.
We note that Theorem~\ref{thrm:infflowdoubly}
does not impose any other specific conditions on matrices $A(k)$ such
as uniformly bounded diagonal entries or uniformly bounded positive entries,
which have been typically assumed in the existing literature
(see for example \cite{Tsitsiklis84,Jadbabaie03,CaoMora,CarliFagnani06,
Krause:02,Boyd06,alex_cdc08}).

We observe that the absolute infinite flow typically requires verifying that
the infinite flow exists along every regular sequence of index sets.
However, to use Theorem~\ref{thrm:infflowdoubly},
we do not have to check the infinite flow for every regular sequence.
This reduction in checking the absolute infinite flow property is due to
Theorem~\ref{thm:reductionabsoluteflow}, which shows that in order to assert
the absolute infinite flow property for doubly stochastic chains,
it suffices that the flow over some specific regular sets is infinite.
We summarize this observation in the following corollary.

\begin{corollary}\label{cor:simpleabsoluteflow}
Let $\Ac$ be a doubly stochastic chain with a permutation component $\Pc$.
Then, the chain is ergodic if and only if $F(\Ac;\Sc)=\infty$ for all
trajectories $\Sc$ of subsets $S(0)\subset [m]$ under $\Pc$.
\end{corollary}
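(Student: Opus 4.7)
The plan is to obtain this corollary as a direct combination of two of the results proved earlier in this section, with no new technical machinery required. The only thing one needs to recognize is that a doubly stochastic chain $\Ac$ sits in the intersection of two nice classes: it is ergodic iff it has the absolute infinite flow property (Theorem~\ref{thrm:infflowdoubly}), and it is decomposable (Lemma~\ref{lemma:doublystochsticdecom}), so the absolute infinite flow property can be checked along trajectories of the permutation component (Theorem~\ref{thm:reductionabsoluteflow}).

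Concretely, I would proceed in the following order. First, invoke Lemma~\ref{lemma:doublystochsticdecom} to conclude that $\Ac$ is decomposable; in particular, the hypothesis that $\Pc$ is a permutation component of $\Ac$ is consistent with such a decomposition. Second, apply Theorem~\ref{thrm:infflowdoubly} to translate ergodicity of $\Ac$ into the absolute infinite flow property of $\Ac$. Third, apply Theorem~\ref{thm:reductionabsoluteflow} to the decomposable chain $\Ac$ with permutation component $\Pc$, which says that the absolute infinite flow property of $\Ac$ is equivalent to $F(\Ac;\Sc)=\infty$ for every trajectory $\Sc$ of a nonempty initial set $S(0)\subset[m]$ under $\Pc$. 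Chaining these equivalences yields the corollary.

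There is essentially no obstacle here; the corollary is a bookkeeping statement that packages Theorems~\ref{thm:reductionabsoluteflow} and~\ref{thrm:infflowdoubly} into the most useful form for checking ergodicity. The only mild subtlety to note in the write-up is that Theorem~\ref{thm:reductionabsoluteflow} is stated for a decomposable chain with a specified permutation component $\Pc$, so one should briefly observe that any permutation component of the doubly stochastic chain $\Ac$ is an admissible choice, which is exactly the hypothesis of the corollary. After that remark, the proof reduces to writing ergodicity $\Leftrightarrow$ absolute infinite flow $\Leftrightarrow$ infinite flow over all $\Pc$-trajectories.
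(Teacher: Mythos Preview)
Your proposal is correct and matches the paper's approach exactly: the corollary is presented there as an immediate consequence of Theorem~\ref{thrm:infflowdoubly} combined with Theorem~\ref{thm:reductionabsoluteflow}, with decomposability of doubly stochastic chains (Lemma~\ref{lemma:doublystochsticdecom}) justifying the application of the latter.
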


\subsection{Rate of Convergence}\label{sec:rateofconvergence}
Here, we explore a rate of convergence result for an ergodic doubly
stochastic chain $\Ac$. In the development, we adopt a dynamical system
point of view for the chain and we consider a Lyapunov function
associated with the dynamics. The final ingredient in the development
is the establishment of another important property of rotational transformation
related to the invariance of the Lyapunov function.

Let $\Ac$ be a doubly stochastic chain and consider the dynamic system driven
by this chain. Specifically, define the following dynamics, starting with any
initial point $x(0)\in\Rm$,
\begin{align}\label{eqn:dynsys}
x(k+1)=A(k)x(k)\qquad\mbox{for $k\geq 0$}.
\end{align}
With this dynamics, we associate a Lyapunov function $V(\cdot)$ defined
as follows:
\begin{align}\label{eqn:lyapunov}
V(x)=\sum_{i=1}^m(x_i-\bar{x})^2\qquad\hbox{for $x\in\Rm$},
\end{align}
where $\bar{x}=\frac{1}{m}e^Tx$.
This function has often been used in studying ergodicity of doubly stochastic
chains (see for example~\cite{Boyd06, Zampieri,alex_cdc08,ErgodicityPaper}).

We now consider the behavior of the Lyapunov function under rotational
transformation of the chain $\Ac$, as given in Definition~\ref{def:rotation}.
It emerged that the Lyapunov function $V$ is invariant under the rotational
transformation, as shown in forthcoming Lemma~\ref{lemma:rotlyap}.
We emphasize that {\it the invariance of the Lyapunov function $V$
holds for arbitrary stochastic chain}; the doubly stochastic assumption is not
needed for the underlying chain.

\begin{lemma}\label{lemma:rotlyap}
Let $\Ac$ be a stochastic chain and $\Pc$ be an arbitrary
permutation chain. Let $\Bc$ be the rotational transformation of $\Ac$ by
$\Pc$. Let $\xc$ and $\{y(k)\}$ be the dynamics obtained by $\Ac$ and $\Bc$,
respectively, with the same initial point $y(0)=x(0)$ where $x(0)\in\Rm$ is
arbitrary. Then, for the function $V(\cdot)$ defined
in Eq.~\eqref{eqn:lyapunov} we have
\[V(x(k))=V(y(k))\qquad\hbox{for all $k\geq 0$}.\]
\end{lemma}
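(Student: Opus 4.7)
The plan is to reduce the claim to the fact that the Lyapunov function $V$ is invariant under coordinate permutations, and then use Lemma~\ref{lemma:rotationproperties}(a) to recognize that $y(k)$ is exactly a permutation of $x(k)$.

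First I would unwind the dynamics explicitly. For $k\geq 1$, we have $x(k)=A(k{:}0)x(0)$ and $y(k)=B(k{:}0)y(0)=B(k{:}0)x(0)$. By Lemma~\ref{lemma:rotationproperties}(a) applied with $s=0$, and using the convention $P(0{:}0)=I$, we get
\[
B(k{:}0)=P^T(k{:}0)A(k{:}0)P(0{:}0)=P^T(k{:}0)A(k{:}0).
\]
Combining these gives the key identity $y(k)=P^T(k{:}0)x(k)$, so $y(k)$ is obtained from $x(k)$ by the single permutation $P^T(k{:}0)$ (which is a permutation matrix as a product of permutation matrices). The $k=0$ case is trivial since $y(0)=x(0)$.

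Next I would show that $V$ is invariant under permutation of coordinates. For any permutation matrix $Q$ and any $z\in\Rm$, two elementary facts suffice: $Qe=e$ (each row and column of $Q$ has exactly one $1$), and $\|Qz\|=\|z\|$ (permutations preserve the Euclidean norm). The first fact yields
\[
\overline{Qz}=\tfrac{1}{m}e^T Qz=\tfrac{1}{m}(Q^T e)^T z=\tfrac{1}{m}e^T z=\bar z,
\]
so the mean is invariant. The second fact then gives
\[
V(Qz)=\|Qz-\overline{Qz}\,e\|^2=\|Q(z-\bar z\,e)\|^2=\|z-\bar z\,e\|^2=V(z),
\]
where we used $Qe=e$ to pull $Q$ out.

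Applying this with $Q=P^T(k{:}0)$ and $z=x(k)$ gives $V(y(k))=V(P^T(k{:}0)x(k))=V(x(k))$, which is the desired equality. There is no real obstacle here: the entire content of the lemma is that $V$ depends only on the multiset of entries of its argument, and rotational transformation acts on states precisely by a (time-dependent) coordinate permutation.
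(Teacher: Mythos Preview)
Your proof is correct and follows essentially the same approach as the paper: use Lemma~\ref{lemma:rotationproperties}(a) with $s=0$ to obtain $y(k)=P^T(k{:}0)x(k)$, then invoke permutation invariance of $V$. You give a slightly more detailed justification of the permutation invariance than the paper (which simply asserts $V(Px)=V(x)$), but the argument is otherwise identical.
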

\begin{proof}
Since $\{y(k)\}$ is the dynamic obtained by $\Bc$, there holds
for any $k\ge0$,
\[y(k)=B(k-1)y(k-1)=\ldots= B(k-1)\cdots B(1)B(0) y(0)=B(k:0)y(0).\]
By Lemma~\ref{lemma:rotationproperties}\eqref{rot:trans}, we have
$B(k:0)=P^T(k:0)A(k:0)P(0:0)$ with $P(0:0)=I$, implying
\begin{align}\label{eqn:relxy}
y(k)=P^T(k:0)A(k:0) y(0)=P^T(k:0)A(k:0)x(0)=P^T(k:0)x(k),
\end{align}
where the second equality follows from $y(0)=x(0)$ and the last equality
follows from the fact that $\xc$ is the dynamic obtained by $\Ac$.
Now, notice that the function $V(\cdot)$
of~Eq.~\eqref{eqn:lyapunov} is invariant under any permutation, that is
$V(Px)=V(x)$ for any permutation matrix $P$.
In view of  Eq.~\eqref{eqn:relxy}, the vector
$y(k)$ is just a permutation of $x(k)$. Hence,
$V(y(k))=V(x(k))$ for all $k\geq 0$.
\end{proof}

Consider an ergodic doubly stochastic chain $\Ac$ with a trivial permutation
component $\{I\}$. Let $t_0=0$ and for any $\delta\in(0,1)$ recursively define
$t_q$, as follows:
\begin{align}\label{eqn:tq}
t_{q+1}=\arg\min_{t\geq t_q+1}\ \min_{S\subset[m]}\
\sum_{t=t_q}^{t-1}A_{S}(k)\geq \delta,
\end{align}
where the second minimum in the above expression is taken over all nonempty
subsets $S\subset [m]$. Basically, $t_q$ is the first time $t> t_{q-1}$
when the accumulated flow from $t=t_{q-1} +1$ to $t= t_q$ exceeds $\delta$
over every nonempty $S\subset[m]$. We refer to the sequence $\{t_q\}$ as
\textit{accumulation times} for the chain $\Ac$. We observe that, when
the chain $\Ac$ has infinite flow property, then
$t_q$ exists for all $q\geq 0$, and any $\delta>0$.

Now, for the sequence of time
instances $\{t_q\}$, we have the following rate of convergence result.

\begin{lemma}\label{lemma:rateofconvtrivial}
Let $\Ac$ be an ergodic doubly stochastic chain with a trivial permutation
component $\{I\}$ and a mixing coefficient $\gamma>0$. Also, let
$\xc$ be the dynamics driven by $\Ac$ starting at an arbitrary point $x(0)$.
Then, for all $q\geq 1$, we have
\[V(x(t_q))\leq
\left(1-\frac{\gamma\delta(1-\delta)^2}{m(m-1)^2}\right)V(x(t_{q-1})),\]
where $t_q$ is defined in \eqref{eqn:tq}.
\end{lemma}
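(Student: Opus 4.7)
The plan is to reduce the lemma to a per-step Lyapunov descent estimate for doubly stochastic matrices with uniformly positive diagonal, then telescope that estimate over the interval $[t_{q-1},t_q]$ using a cut $S^*\subset[m]$ chosen from the sorted coordinates of $x(t_{q-1})$. Throughout I set $s=t_{q-1}$, $t=t_q$ and use that $\bar x$ is preserved by doubly stochastic matrices and that $V(x(k))$ is non-increasing, so it suffices to lower-bound $V(x(s))-V(x(\tau))$ for some intermediate $\tau\le t$.

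First, I would prove the one-step bound
\[V(x)-V(Ax)\ge\gamma\sum_{j<\ell}\bigl(A_{j\ell}+A_{\ell j}\bigr)(x_j-x_\ell)^2\]
for every doubly stochastic $A$ with $A_{ii}\ge\gamma$. The identity $V(x)-V(Ax)=\tfrac12\sum_{j,\ell}(x_j-x_\ell)^2\sum_i A_{ij}A_{i\ell}$ uses both row and column stochasticity, and the estimate $\sum_i A_{ij}A_{i\ell}\ge A_{jj}A_{j\ell}+A_{\ell\ell}A_{\ell j}\ge\gamma(A_{j\ell}+A_{\ell j})$ for $j\ne\ell$ then delivers the claim. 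Restricting the sum to pairs crossing any $S\subset[m]$ gives
\[V(x)-V(Ax)\ge\gamma\,A_S\cdot\min_{j\in S,\,\ell\in\bar S}(x_j-x_\ell)^2.\]

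Next, I would choose $S^*$. Sort $x(s)$ as $y_1\le\cdots\le y_m$, let $\Delta=y_{i^*+1}-y_{i^*}$ be the largest consecutive gap, and set $S^*=\{i:x_i(s)\le y_{i^*}\}$. Since $|y_i-\bar x|\le y_m-y_1\le(m-1)\Delta$, we obtain $V(x(s))\le m(m-1)^2\Delta^2$, i.e.\ $\Delta^2\ge V(x(s))/(m(m-1)^2)$, which is the source of the denominator $m(m-1)^2$ in the target rate. Then I would introduce the cluster gap $\phi(k):=\min_{\ell\in\bar{S^*}}x_\ell(k)-\max_{j\in S^*}x_j(k)$, with $\phi(s)=\Delta$, and derive from the convex-combination form of $A(k)x(k)$ and the bound $\max_{j\in S^*}\sum_{\ell\in\bar{S^*}}A_{j\ell}(k)\le A_{S^*}(k)$ (and the symmetric one) an estimate of the type $\phi(k+1)\ge\phi(k)-C\,A_{S^*}(k)$ with $C$ controlled by the initial range $M_s-m_s\le(m-1)\Delta$. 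Summing, $\phi(k)\ge(1-\delta)\Delta$ for all $k$ such that the partial flow $G_k=\sum_{k'=s}^{k-1}A_{S^*}(k')$ is below a suitable threshold.

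The finish: let $\tau\le t$ be the first time that threshold is crossed. Telescoping the per-step bound on $[s,\tau-1]$ using $\phi(k)\ge(1-\delta)\Delta$ and the definition of $t_q$, which forces $\sum_{k=s}^{t-1}A_{S^*}(k)\ge\delta$, yields
\[V(x(s))-V(x(\tau))\ge\gamma(1-\delta)^2\Delta^2\sum_{k=s}^{\tau-1}A_{S^*}(k)\ge\gamma\delta(1-\delta)^2\Delta^2,\]
and plugging in $\Delta^2\ge V(x(s))/(m(m-1)^2)$ together with $V(x(t))\le V(x(\tau))$ gives the claimed contraction. The main obstacle is the delicate balancing in the cluster-gap argument: the definition of $t_q$ only guarantees the \emph{total} flow across the fixed cut $S^*$ over $[s,t-1]$ to be at least $\delta$, while the bound on $\phi(k)$ demands that the gap remain at least $(1-\delta)\Delta$ precisely while most of that $\delta$ flow is being accumulated. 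Calibrating the threshold on $G_k$ so that these two conditions are compatible, and avoiding loss of extra $m$-factors in the range bound $M_s-m_s\le(m-1)\Delta$, is where the proof has to be executed carefully in order to land exactly on the rate $1-\gamma\delta(1-\delta)^2/(m(m-1)^2)$.
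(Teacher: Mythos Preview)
Your per-step identity $V(x)-V(Ax)=\sum_{i<j}H_{ij}(x_i-x_j)^2$ with $H=A^TA$ and the extraction $H_{ij}\ge\gamma(A_{ij}+A_{ji})$ are exactly what the paper uses. The divergence is entirely in the second half of the argument, and there the single--cut plan does not close.

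The difficulty you flag in your last paragraph is real and, as stated, fatal for the target constant. Your cluster-gap recursion gives $\phi(k+1)\ge\phi(k)-C\,A_{S^*}(k)$ with $C$ of order $M_s-m_s$; since $M_s-m_s$ can be as large as $(m-1)\Delta$, the condition $\phi(k)\ge(1-\delta)\Delta$ is only guaranteed while $G_k\lesssim\delta\Delta/C\le\delta/(m-1)$. Hence the stopping time $\tau$ satisfies $\sum_{k=s}^{\tau-1}A_{S^*}(k)\approx\delta/(m-1)$ in the worst case, not $\delta$. The displayed inequality $\sum_{k=s}^{\tau-1}A_{S^*}(k)\ge\delta$ therefore does not follow from the definition of $t_q$: that definition bounds the flow up to time $t$, not up to $\tau$, and $\tau<t$ is precisely the bad case. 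The net effect is an extra factor of roughly $m-1$ in the denominator of the contraction rate, which no choice of threshold can remove from the single-cut argument as written.

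The paper avoids this by never isolating a single cut. After telescoping the same per-step bound over $[t_{q-1},t_q)$, it keeps the full double sum $\sum_k\sum_{i<j}(A_{ij}(k)+A_{ji}(k))(x_i(k)-x_j(k))^2$ and invokes two external lemmas: one (Lemma~12 of \cite{ErgodicityPaper}) lower-bounds this double sum by $\dfrac{\delta(1-\delta)^2}{y_m-y_1}\sum_{\ell}(y_{\ell+1}-y_\ell)^3$, where $y_1\le\cdots\le y_m$ are the sorted entries of $x(t_{q-1})$; the other (Lemma~11 of the same paper) upper-bounds $V(x(t_{q-1}))$ by $\dfrac{m(m-1)^2}{y_m-y_1}\sum_{\ell}(y_{\ell+1}-y_\ell)^3$. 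The cube sum over \emph{all} consecutive gaps is what absorbs the factor you are losing: the gap-shrinkage cost is distributed across the $m-1$ cuts rather than charged entirely to the one with the largest gap. If you want to repair your route, you would need to run the cluster-gap argument simultaneously for every sorted cut $S_\ell=\{i:x_i(s)\le y_\ell\}$ and then sum; doing so carefully reproduces the cube-sum inequality, which is essentially the content of the cited lemma.
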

The proof of Lemma~\ref{lemma:rateofconvtrivial} is based on the proof of
Theorem~10 in~\cite{NedichOlshevskyOzdaglar:09} and Theorem~6
in~\cite{ErgodicityPaper}, and it is provided in Appendix.

Using the invariance of the Lyapunov function under rotational transformation
and the properties of this transformation, we can establish a result
analogous to Lemma~\ref{lemma:rateofconvtrivial}
for an arbitrary ergodic chain of doubly stochastic matrices $\Ac$. In other
words, we can extend Lemma~\ref{lemma:rateofconvtrivial} to the case when the
chain $\Ac$ does not necessarily have the trivial permutation component
$\{I\}$. To do so, we appropriately adjust the definition of
the accumulation times $\{t_q\}$ for this case. In particular, we let
$\delta>0$ be arbitrary but fixed, and let $\Pc$ be a permutation component of
an ergodic chain $\Ac$.
Next, we let $t_0=0$ and for $q\ge1$, we define $t_q$ as follows:
\begin{align}\label{eqn:tqgen}
t_{q+1}=\arg\min_{t\geq t_q+1}\ \min_{S(0)\subset[m]}\
\sum_{t=t^{\delta}_q}^{t-1}A_{S(k+1)S(k)}(k)\geq \delta,
\end{align}
where $\Sc$ is the trajectory of the set $S(0)$ under $\Pc$.

We have the following convergence result.
\begin{theorem}\label{thrm:rateofconv}
Let $\Ac$ be an ergodic doubly stochastic chain with a permutation component
$\Pc$ and a mixing coefficient $\gamma>0$. Also, let $\xc$ be the dynamics
driven by $\Ac$ starting at an arbitrary point $x(0)$. Then, for all $q\geq 1$,
we have
\[V(x(t_q))\leq
\left(1-\frac{\gamma\delta(1-\delta)^2}{m(m-1)^2}\right)V(x(t_{q-1})),\]
where $t_q$ is defined in \eqref{eqn:tqgen}.
\end{theorem}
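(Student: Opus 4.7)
The plan is to reduce the general case to Lemma~\ref{lemma:rateofconvtrivial} (which handles the trivial permutation component) via the rotational transformation, exploiting the invariance properties already established for this transformation.

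First I would form the rotational transformation $\Bc$ of $\Ac$ with respect to its permutation component $\Pc$. Three facts assembled from prior results make $\Bc$ fit the hypotheses of Lemma~\ref{lemma:rateofconvtrivial}: (i) by Lemma~\ref{lemma:rotationdecomposable}, $\Bc$ is decomposable with trivial permutation component $\{I\}$ and the same mixing coefficient $\gamma$; (ii) since $B(k)=P^T(k+1{:}0)A(k)P(k{:}0)$ is a product of doubly stochastic matrices, $\Bc$ is a doubly stochastic chain; and (iii) by Lemma~\ref{lemma:rotationproperties}\eqref{rot:equi}, $\Bc$ is ergodic because $\Ac$ is.

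Next I would run the dynamics $\{y(k)\}$ driven by $\Bc$ with $y(0)=x(0)$. The crucial link between the two dynamics is Lemma~\ref{lemma:rotlyap}, which gives $V(x(k))=V(y(k))$ for every $k\geq 0$. So it suffices to prove the claimed geometric decrease of $V$ along $\{y(k)\}$ at the times $\{t_q\}$. The remaining issue is that Lemma~\ref{lemma:rateofconvtrivial} uses the accumulation times defined by \eqref{eqn:tq} relative to $\Bc$, while the theorem states the result at the times defined by \eqref{eqn:tqgen} relative to $\Ac$. I would reconcile these using Lemma~\ref{lemma:rotationproperties}\eqref{rot:flow}: for any nonempty $S\subset[m]$, if $\Sc$ denotes the trajectory of $S(0)=S$ under $\Pc$, then $B_S(k)=A_{S(k+1)S(k)}(k)$. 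Summing over $k$ in any time window gives
\[\sum_{k=t_{q-1}}^{t-1}B_S(k)=\sum_{k=t_{q-1}}^{t-1}A_{S(k+1)S(k)}(k),\]
and since the map $S(0)\mapsto\Sc$ is a bijection between nonempty subsets of $[m]$ and trajectories under $\Pc$, the minima over $S\subset[m]$ in \eqref{eqn:tq} applied to $\Bc$ agree with the minima over $S(0)\subset[m]$ in \eqref{eqn:tqgen} applied to $\Ac$. Hence the two definitions produce identical accumulation times.

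Finally, applying Lemma~\ref{lemma:rateofconvtrivial} to $\Bc$ and $\{y(k)\}$ at these common times gives
\[V(y(t_q))\leq\left(1-\frac{\gamma\delta(1-\delta)^2}{m(m-1)^2}\right)V(y(t_{q-1})),\]
and substituting $V(y(t_q))=V(x(t_q))$ and $V(y(t_{q-1}))=V(x(t_{q-1}))$ yields the conclusion. The only nontrivial step is the identification of the accumulation times; everything else is a direct appeal to previously established properties of the rotational transformation.
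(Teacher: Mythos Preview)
Your proposal is correct and follows essentially the same approach as the paper's proof: rotationally transform $\Ac$ by $\Pc$ to obtain $\Bc$, identify the accumulation times via Lemma~\ref{lemma:rotationproperties}\eqref{rot:flow}, apply Lemma~\ref{lemma:rateofconvtrivial} to the $\Bc$-dynamics $\{y(k)\}$, and transfer the inequality back using $V(x(k))=V(y(k))$ from Lemma~\ref{lemma:rotlyap}. Your write-up is in fact slightly more careful than the paper's, explicitly verifying that $\Bc$ is doubly stochastic and ergodic so that Lemma~\ref{lemma:rateofconvtrivial} applies.
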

\begin{proof}
Let $\Bc$ be the rotational transformation of the chain $\Ac$ with respect to
$\Pc$. Also, let $\yc$ be the dynamics driven by chain $\Bc$ with the initial
point $y(0)=x(0)$. By Lemma~\ref{lemma:rotationdecomposable}, $\Bc$ has the
trivial permutation component $\{I\}$. Thus,
by Lemma~\ref{lemma:rateofconvtrivial}, we have for all $q\ge1,$
\[V(y(t_q))\leq \left(1-\frac{\gamma\delta(1-\delta)^2}{m(m-1)^2}\right)
V(y(t_{q-1})).\]
Now, by Lemma~\ref{lemma:rotationproperties}\eqref{rot:flow}, we have
$A_{S(k+1)S(k)}(k)=B_S(k)$. Therefore, the accumulation times for the chain
$\Ac$ are the same as the accumulation times for the chain $\Bc$. Furthermore,
according Lemma~\ref{lemma:rotlyap},  we have
$V(y(k))=V(x(k))$ for all $k\geq 0$ and, hence, for all $q\ge1$,
\[V(x(t_q))\leq \left(1-\frac{\gamma\delta(1-\delta)^2}{m(m-1)^2}\right)
V(x(t_{q-1})).\]
\end{proof}

\subsection{Doubly Stochastic Chains without Absolute Flow Property}
\label{sec:infgraph}
So far we have been concerned with doubly stochastic chains with the absolute
infinite flow property. In this section, we turn our attention to the
case when the absolute flow property is absent.
In particular, we are interested
in characterizing the limiting behavior of backward product of a doubly
stochastic chain that does not have the absolute infinite flow.

Since a doubly stochastic chain is decomposable,
Theorem~\ref{thm:reductionabsoluteflow} is applicable, so by this
theorem when the chain $\Ac$ does not have
the absolute infinite flow property, there holds
$F(\Ac;\Sc)<\infty$ for some $S(0)\subset[m]$ and its trajectory under
a permutation component $\Pc$ of $\Ac$. This permutation component will be
important so we denote it by $\mathcal{P}$.
Furthermore, for this permutation component,
let $\Ginf_{\mathcal{P}}=([m],\Einf_{\mathcal{P}})$ be the undirected
graph with the edge set $\Einf_{\mathcal{P}}$ given by
\[\Einf_{\mathcal{P}}=\left\{\{i,j\} \,\big|\,
\sum_{k=0}^{\infty}A_{i(k+1),j(k)}(k)=\infty\right\},\]
where $\{i(k)\}$ and $\{j(k)\}$ are the trajectories of the sets $S(0)=\{i\}$
and $S(0)=\{j\}$, respectively, under the permutation component $\Pc$;
formally, $e_{i(k)}=P(k:0)e_i$ and $e_{j(k)}=P(k:0)e_j$ for all $k$
with $P(0:0)=I$.
We refer to the graph $\Ginf_{\mathcal{P}}=([m],\Einf_{\mathcal{P}})$ as the
\textit{infinite flow graph} of the chain $\Ac$.
When the permutation component $\Pc$ is trivial, we use $\Ginf$ to denote
$\Ginf_{\mathcal{P}}$.

By Theorem~\ref{thm:reductionabsoluteflow}, we have that $\Ginf_{\mathcal{P}}$
is connected if and only if the chain has the absolute infinite flow property.
Theorem~6 in~\cite{TouriNedich:Approx} shows that for a chain with the trivial
permutation component $\{I\}$, the connectivity of $\Ginf$ is closely related
to the limiting matrices of the product $A(k:t_0)$, as $k\to\infty$
(in the case of the trivial permutation component $\{I\}$,
we have $i(k)=i$ for all $i\in[m]$ and $k\geq 0$).
The following result is just an implication of Theorem~6
in~\cite{TouriNedich:Approx} for the special case of doubly
stochastic chains that have the trivial permutation component.

\begin{theorem}\label{thrm:infgraphtrivial}
Let $\Ac$ be a doubly stochastic chain with the trivial permutation component
$\{I\}$. Then, for any starting time $t_0$, the limit
$A^{\infty}(t_0)=\lim_{k\rightarrow \infty}A(k:t_0)$ exists. Moreover,
the $i$th row and the $j$th row of $A^{\infty}(t_0)$ are identical for any
$t_0\geq 0$, i.e., $\lim_{k\rightarrow\infty}\|A_i(k:t_0)-A_j(k:t_0)\|=0$,
if and only if $i$ and $j$ belong to the same connected component of $\Ginf$.
\end{theorem}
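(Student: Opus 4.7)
The plan is to invoke the previously established machinery rather than reprove it in full, since the authors flag this as a specialization of Theorem~6 in \cite{TouriNedich:Approx}. The key observation is that the hypothesis ``trivial permutation component'' together with doubly stochasticity places the chain inside the framework of that theorem: from the decomposition $A(k)=\gamma I+(1-\gamma)\tilde A(k)$ we get $A_{ii}(k)\ge\gamma$ uniformly in $k$ and $i$, which is precisely the feedback/self-confidence hypothesis exploited in \cite{TouriNedich:Approx}. With trivial permutation component, $i(k)=i$ for all $k$, so the infinite flow graph $\Ginf_{\mathcal{P}}$ here coincides verbatim with the infinite flow graph of \cite{TouriNedich:Approx}, and the conclusion transfers directly.

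If I wanted a self-contained argument, I would proceed in three stages. First, for the ``if'' direction (same component implies identical rows in the limit), I would use the Lyapunov function $V$ from Eq.~\eqref{eqn:lyapunov}, applied column-wise to the backward product, i.e.\ set $z^{(t_0)}(k)=A^j(k:t_0)$ and track $V(z^{(t_0)}(k))$. Along any edge $\{i,j\}\in\Einf$ with $\sum_k(A_{ij}(k)+A_{ji}(k))=\infty$, an argument in the spirit of Lemma~\ref{lemma:rateofconvtrivial} (combining the mixing coefficient $\gamma$ with a single time-slice flow) forces $A_i(k:t_0)-A_j(k:t_0)\to 0$. Transitivity along a path inside the connected component then yields equality for any pair $i,j$ in the same component.

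Second, for the existence of the limit $A^\infty(t_0)$, I would partition $[m]$ into connected components $C_1,\dots,C_r$ of $\Ginf$ and study the restricted ``mass''
\begin{equation*}
M_\ell(k)=\sum_{i\in C_\ell}\bigl(A(k:t_0)\bigr)_{ij_0}
\end{equation*}
for a fixed column index $j_0$. Because $\Ac$ is doubly stochastic and the cross-component flow $\sum_k A_{C_\ell}(k)$ is finite by definition of $\Einf$, the sequence $\{M_\ell(k)\}$ is Cauchy: its increments are controlled by the summable inter-component flows acting on a uniformly bounded input. Combined with the first step, which tells us the entries within $C_\ell$ equalize, this forces the entire column of $A(k:t_0)$ restricted to $C_\ell$ to converge to $M_\ell^\infty/|C_\ell|$ times the indicator.

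Finally, the ``only if'' direction (indices in different components yield distinct rows, in general) falls out of the same mass-conservation picture: for indices $i\in C_\ell$ and $j\in C_{\ell'}$ with $\ell\ne \ell'$, choosing the initial condition $x(t_0)=e_{i}$ makes $M_\ell^\infty$ close to $1$ and $M_{\ell'}^\infty$ close to $0$, up to the summable cross-flow perturbation, so the $i$th and $j$th rows of $A^\infty(t_0)$ cannot coincide. The main obstacle in a from-scratch proof is the bookkeeping in this second stage: one must control the leakage between components over all time simultaneously, which requires exploiting both double stochasticity (for conservation of the column sum) and the summability of inter-component flows (for the Cauchy property of $M_\ell$). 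Since Theorem~6 of \cite{TouriNedich:Approx} already executes this argument in a broader context, I would simply cite it and verify the hypotheses.
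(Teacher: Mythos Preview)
Your proposal is correct and matches the paper's approach exactly: the paper gives no proof of this theorem at all, simply stating it as ``an implication of Theorem~6 in~\cite{TouriNedich:Approx} for the special case of doubly stochastic chains that have the trivial permutation component,'' which is precisely what you do in your first paragraph (including the correct hypothesis check that $A_{ii}(k)\ge\gamma$ and that $i(k)=i$ collapses $\Ginf_{\mathcal P}$ to $\Ginf$). Your three-stage self-contained sketch is more than the paper offers; one small caution there is that in the ``only if'' direction you mix rows and columns---setting $x(t_0)=e_i$ extracts the $i$th \emph{column} of $A(k:t_0)$, whereas the theorem is about rows, so the mass-conservation argument should be phrased in terms of the backward dynamics $e_i^T A(k:t_0)$ (or, equivalently, look at columns of $A^\infty(t_0)$ and use double stochasticity to transfer the conclusion to rows).
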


Using Lemma~\ref{lemma:rotationproperties},
Lemma~\ref{lemma:rotationdecomposable}
and Theorem~\ref{thrm:infgraphtrivial}, we prove a similar result for the case
of a general doubly stochastic chain.
\begin{theorem}\label{thrm:infgraphgeneral}
Let $\Ac$ be a doubly stochastic chain with a permutation component $\Pc$.
Then, for any starting time $t_0\geq 0$, the product $A(k:t_0)$ converges up to
a permutation of its rows; i.e.,
there exists a permutation sequence $\{Q(k)\}$ such that
$\lim_{k\rightarrow\infty}Q(k)A(k:t_0)$ exists for any $t_0\ge0$.
Moreover, for the trajectories $\{i(k)\}$ and $\{j(k)\}$ of $S(0)=\{i\}$ and
$S(0)=\{j\}$, respectively, under the permutation component $\Pc$, we have
\[\lim_{k\rightarrow\infty}\|A_{i(k)}(k:t_0)-A_{j(k)}(k:t_0)\|=0\qquad
\mbox{for any starting time $t_0$},\]
if and only if $i$ and $j$ belong to the same connected component of
$\Ginf_{\mathcal{P}}$.
\end{theorem}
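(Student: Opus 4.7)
The plan is to reduce Theorem~\ref{thrm:infgraphgeneral} to the already-proved trivial-permutation-component case by rotationally transforming $\Ac$. Let $\Bc$ be the rotational transformation of $\Ac$ by $\Pc$. Two preparatory facts make the reduction work: since each $B(k)=P^T(k+1:0)A(k)P(k:0)$ is a product of doubly stochastic matrices, the chain $\Bc$ is doubly stochastic; and by Lemma~\ref{lemma:rotationdecomposable} it has the trivial permutation component $\{I\}$. Therefore Theorem~\ref{thrm:infgraphtrivial} applies to $\Bc$ and yields the existence of $B^{\infty}(t_0):=\lim_{k\to\infty}B(k:t_0)$ for every $t_0\ge 0$.

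Next I would translate this back to $\Ac$. By Lemma~\ref{lemma:rotationproperties}(a), $B(k:t_0)=P^T(k:0)A(k:t_0)P(t_0:0)$, so defining the permutation sequence $Q(k):=P^T(k:0)$ gives
\[Q(k)A(k:t_0)=B(k:t_0)P^T(t_0:0)\longrightarrow B^{\infty}(t_0)P^T(t_0:0)\quad\text{as }k\to\infty,\]
which proves the convergence-up-to-a-permutation assertion. For the row-agreement characterization, the identity $e_{i(k)}=P(k:0)e_i$ combined with part~(a) yields $A_{i(k)}(k:t_0)=e_i^TP^T(k:0)A(k:t_0)=B_i(k:t_0)P^T(t_0:0)$, and since right-multiplication by a permutation matrix preserves the Euclidean norm,
\[\|A_{i(k)}(k:t_0)-A_{j(k)}(k:t_0)\|=\|B_i(k:t_0)-B_j(k:t_0)\|.\]
Applying Theorem~\ref{thrm:infgraphtrivial} to $\Bc$, the right-hand side tends to zero iff $i$ and $j$ lie in the same connected component of the infinite flow graph $\Ginf$ of $\Bc$.

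The final step, which I expect to be the only one requiring any real care, is to identify $\Ginf$ (associated with $\Bc$ under the trivial permutation component) with $\Ginf_{\mathcal{P}}$ (associated with $\Ac$ along singleton trajectories under $\Pc$). By Lemma~\ref{lemma:rotationproperties}(d), for each singleton initial set the flow of $\Ac$ along its trajectory under $\Pc$ matches termwise the corresponding static flow of $\Bc$; applied to the pair of trajectories defining each putative edge, this shows that the edge sets $\Einf$ and $\Einf_{\mathcal{P}}$ coincide and hence so do the connected components. The main obstacle is therefore purely bookkeeping: tracking how the index shift $i\mapsto i(k)$ from $P(k:0)$ on the left and the trailing $P^T(t_0:0)$ on the right interact when translating row statements between $A(k:t_0)$ and $B(k:t_0)$. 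No new ergodicity or flow estimates beyond those already established are needed.
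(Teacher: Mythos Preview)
Your approach is essentially identical to the paper's: rotate by $\Pc$, apply Theorem~\ref{thrm:infgraphtrivial} to $\Bc$, and translate back via Lemma~\ref{lemma:rotationproperties}(a) with $Q(k)=P^T(k:0)$; the row-norm identity you derive is exactly Eq.~\eqref{eqn:ijdisteq} in the paper.

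One small point on the final step: Lemma~\ref{lemma:rotationproperties}(d) concerns the \emph{set flow} $A_{S(k+1),S(k)}(k)=B_S(k)$, which for a singleton $S=\{i\}$ aggregates all off-diagonal entries in row and column $i$ and does not isolate the individual entry needed to test the edge $\{i,j\}$ in $\Einf_{\mathcal{P}}$. The paper instead uses the direct entrywise identity
\[B_{ij}(k)=e_i^TP^T(k+1:0)A(k)P(k:0)e_j=A_{i(k+1),j(k)}(k),\]
which immediately shows $\sum_k B_{ij}(k)=\sum_k A_{i(k+1),j(k)}(k)$ and hence $\Einf=\Einf_{\mathcal{P}}$. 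This is a simpler computation than part~(d) and is what you actually need; otherwise your outline matches the paper exactly.
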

\begin{proof}
Let $\Bc$ be the rotational transformation of $\Ac$ by the permutation
component $\Pc$. As proven in Lemma~\ref{lemma:rotationdecomposable},
the chain $\Bc$ has the trivial permutation component. Hence, by
Theorem~\ref{thrm:infgraphtrivial},
the limit $B^{\infty}(t_0)=\lim_{k\rightarrow\infty}B(k:t_0)$ exists
for any $t_0\geq 0$. On the other hand, by
Lemma~\ref{lemma:rotationproperties}\eqref{rot:trans}, we have
\[B(k:t_0)=P^T(k:0)A(k:t_0)P(t_0:0)\qquad\hbox{for all $k>t_0$}.\]
Multiplying by $P^T(t_0:0)$ from the right, and using $PP^T=I$ which is valid
for any permutation matrix $P$, we obtain
\[B(k:t_0)P(t_0:0)^T=P^T(k:0)A(k:t_0)\qquad\hbox{for all $k>t_0$}.\]
Therefore, $\lim_{k\rightarrow\infty}B(k:t_0)P(t_0:0)^T$ always exists for any
starting time $t_0$ since $B(k:t_0)P^T(t_0:0)$ is obtained
by a fixed permutation of the columns of $B(k:t_0)$. Thus, if we let
$Q(k)=P^T(k:0)$, then $\lim_{k\rightarrow\infty}Q(k)A(k:t_0)$ exists for any
$t_0$ which proves the first part of the theorem.

For the second part, by Theorem~\ref{thrm:infgraphtrivial}, we have
$\lim_{k\rightarrow\infty}\|B_i(k:t_0)-B_j(k:t_0)\|=0$ for any $t_0\geq 0$
if and only if $i$ and $j$ belong to the same connected component
of the infinite flow graph of $\Bc$.
By the definition of the rotational
transformation, we have $B(k:t_0)=P^T(k:0)A(k:t_0)P(t_0:0)$.
Then, for the $i$th and $j$th row of $B(k:t_0)$, we have
according to Eq.~\eqref{eqn:ijdisteq}:
\[\|B_i(k:t_0)-B_j(k:t_0)\|
=\|A_{i(k)}(k:t_0)-A_{j(k)}(k:t_0)\|,\]
where $e_{i(k)}=P(k:0)e_i$ and $e_{j(k)}=P(k:0)e_j$ for all $k$.
Therefore, $\lim_{k\rightarrow\infty}\|B_i(k:t_0)-B_j(k:t_0)\|=0$
if and only if
$\lim_{k\rightarrow\infty}\|A_{i(k)}(k:t_0)-A_{j(k)}(k:t_0)\|=0$. Thus,
$\lim_{k\rightarrow\infty}\|A_{i(k)}(k:t_0)-A_{j(k)}(k:t_0)\|=0$ for any
$t_0\geq 0$ if and only if $i$ and $j$ belong to the same connected component
of the infinite flow graph of $\Bc$. The last step is to show that
the infinite flow graph of $\Bc$ and $\Ac$ are the same. This
however, follows from
\[\sum_{k=0}^\infty B_{ij}(k)
=\sum_{k=0}^{\infty}e_i^T[P^T(k+1:0)A(k)P(k:0)]e_j=
\sum_{k=0}^{\infty}A_{i(k+1),j(k)}(k).\]
\end{proof}

By Theorem~\ref{thrm:infgraphgeneral}, for any doubly stochastic chain $\Ac$
and any fixed $t_0$, the sequence consisting of the rows of
$A(k:t_0)$ converges to an \textit{ ordered set of $m$ points}
in the probability simplex of $\Rm$, as $k$ approaches to infinity.
In general, this is not true for an arbitrary stochastic chain. For example,
consider the stochastic chain
\begin{align*}
A(2k)=\left[\begin{array}{ccc}
1&0&0\\ 1&0&0\\ 0&0&1
\end{array}\right],\quad A(2k+1)=\left[\begin{array}{ccc}
1&0&0\\ 0&0&1\\ 0&0&1
\end{array}\right]\qquad\mbox{for all $k\geq 0$}.
\end{align*}
For this chain, we have $A(2k:0)=A(2k)$ and $A(2k+1:0)=A(2k+1)$. Hence,
depending on the parity of $k$, the set consisting of the rows of $A(k:0)$
alters between $\{(1,0,0),(1,0,0),(0,0,1)\}$ and $\{(1,0,0),(0,0,1),(0,0,1)\}$
and, hence, never converges to a unique ordered set of 3 points in $\R^3$.

\section{Application to Switching Systems}\label{sec:switch}
Motivated by the work in~\cite{Gurvits}, here we study
an application of the developed results to the stability analysis
of switching systems that are driven by a subset of stochastic
or doubly stochastic matrices.

Consider an arbitrary collection $\M$ of $m\times m$ stochastic
matrices.
Let us say that a chain $\Ac$ is in $\M$ if $A(k)\in \M$ for all $k\geq 0$.
Our main goal is to characterize conditions on the collection $\M$ such that
\begin{align}\label{eqn:AAS}
\lim_{k\to\infty}A(k:0)=\frac{1}{m}ee^T,
\end{align}
for any chain $\Ac$ in $\M$.
If $\Ac$ is a doubly stochastic chain and we let
$\tilde{A}(k)=A(k)-\frac{1}{m}ee^T$, then we have
$\tilde{A}(k:0)=A(k:0)-\frac{1}{m}ee^T$.
Therefore, relation~\eqref{eqn:AAS} holds for any doubly stochastic chain
$\Ac$ in $\M$ if and only if the (translated) discrete linear inclusion system
$\tilde{\M}=\{A\in \M\mid A-\frac{1}{m}ee^T\}$ is absolutely
asymptotically stable as defined in \cite{Gurvits}.

Also, note that if the relation~\eqref{eqn:AAS} holds for any chain $\Ac$ in
$\M$, then we should have $\lim_{k\to\infty}A(k:t_0)=\frac{1}{m}ee^T$
for all $t_0\geq 0 $ and $\Ac$ in $\M$. This simply follows from the fact that
if $\Ac$ is in $\M$, then any truncated chain $\{A(k)\}_{k\geq t_0}$ of
$\Ac$ should also be in $\M$. Thus, relation~\eqref{eqn:AAS} holds for all
chains $\Ac$ in $\M$ if and only if $\M$ only consists of ergodic chains.

Based on the preceding discussion and the definition of the
absolute asymptotic stability for discrete linear inclusion
systems~\cite{Gurvits}, let us define the absolute asymptotic stability
for a collection of stochastic matrices.
\begin{definition}\label{def:AAS}
We say that a collection $\M$ of stochastic matrices is absolutely
asymptotically stable if every chain $\Ac$ in $\M$ is ergodic.
\end{definition}

To derive necessary and sufficient conditions for the
absolute asymptotic stability, we define the \textit{zero-flow graph}
associated with the collection $\M$. This graph is defined over
a vertex set consisting of all subsets of the set $[m]$, which is denoted
by $\Ps([m])$.
\begin{definition}
Given a collection $\M$, let $G_0(\M)=(\Ps([m]),\E_0(M))$
be a directed graph with the vertex set $\Ps([m])$
and the edge set $\E_0(\M)$ defined by:
  \begin{align}\nonumber
  \E_0(\M)=\{(S,T)\mid S,T\subset [m], |S|=|T|\not=0, \inf_{A\in\M}A_{TS}=0\}.
  \end{align}
 \end{definition}
In other words, an edge $(S,T)$ is in $\E_0(\M)$ if the subsets
$S,T\subset[m]$ are non-trivial and have the same cardinality,
and more importantly, there exists a sequence of matrices
$A^{(k)}\in \M$ such that $A^{(k)}_{ST}\to 0$ as $k$ goes to infinity.

For the forthcoming discussion, let us say that the collection $\M$ has
the absolute infinite flow property if every chain $\Ac$ in $\M$ has
the absolute infinite flow property.
The following result provides a link between
the absolute infinite flow property and the non-existence of cycle in
the graph $G_0(\M)$.
\begin{lemma}\label{lemma:G0absoluteinf}
    A collection $\M$ has the absolute infinite flow property if and only if
its zero-flow graph $G_0(\M)$ is cycle-free.
  \end{lemma}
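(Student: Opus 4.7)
My plan is to prove both directions by contraposition, exploiting the finiteness of $\Ps([m])$. In the easy direction, assume $G_0(\M)$ contains a cycle $S_0 \to S_1 \to \cdots \to S_{r-1} \to S_r = S_0$. Every edge of $G_0(\M)$ forces its endpoints to have the same (nonzero) cardinality, so the $S_i$ all share a common cardinality. I will define $S(k) = S_{k \bmod r}$, which yields a regular sequence $\Sc$. Because $(S_{k \bmod r}, S_{(k+1) \bmod r}) \in \E_0(\M)$ means $\inf_{A \in \M} A_{S(k+1),S(k)} = 0$, I can pick $A(k) \in \M$ with $A_{S(k+1),S(k)}(k) \le 2^{-k}$, obtaining a chain $\Ac$ in $\M$ with $F(\Ac;\Sc) \le 2 < \infty$. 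Hence $\M$ fails the absolute infinite flow property.

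For the reverse implication, suppose $\M$ lacks the absolute infinite flow property, so some chain $\Ac$ in $\M$ admits a regular sequence $\Sc$ with $F(\Ac;\Sc) < \infty$; in particular $A_{S(k+1),S(k)}(k) \to 0$. Since $S(k)$ ranges over the finite family of subsets of $[m]$ of cardinality $|S(0)|$, let $V^* \subseteq \Ps([m])$ be the nonempty collection of subsets appearing infinitely often in $\Sc$. The heart of the argument is a two-stage pigeonhole: for each $v \in V^*$, the infinitely many indices $k$ with $S(k) = v$ force some $v' \in V^*$ to satisfy $(S(k),S(k+1)) = (v,v')$ for infinitely many $k$. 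Along that subsequence $A_{v',v}(k) \to 0$, so $\inf_{A \in \M} A_{v',v} = 0$ and $(v,v') \in \E_0(\M)$. Fixing one such persistent successor for every $v \in V^*$ defines a map from $V^*$ to $V^*$; iterating it from any starting point must eventually revisit a vertex, and the closed segment between two occurrences, after trimming internal repeats, is a simple cycle of $G_0(\M)$.

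The routine step is the forward direction, which is essentially an assembly problem once one observes that edges of $\E_0(\M)$ preserve cardinality. The delicate part is the reverse direction: the sequence $\Sc$ need not be eventually periodic, so one cannot simply read off a cycle from $\Sc$ itself. The trick I rely on is to separate the two questions of which subsets appear infinitely often and how the walk transitions between them, reducing the problem to a finite directed graph in which every vertex has positive out-degree, where a simple cycle is then guaranteed by elementary graph theory.
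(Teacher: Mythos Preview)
Your proof is correct and follows the same overall strategy as the paper: contraposition in both directions, with pigeonhole on the finite set of subsets of $[m]$ of a fixed cardinality. The forward direction is essentially identical to the paper's, though your construction is slightly leaner---you pick a single matrix $A(k)\in\M$ at each time with $A_{S(k+1),S(k)}(k)\le 2^{-k}$, whereas the paper builds $r$ separate approximating sequences (one per edge of the cycle) and interleaves them; the effect is the same.

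In the reverse direction the two arguments diverge in execution. The paper views $\{S(k)\}$ as an infinite walk in the complete digraph on $\{T\subset[m]:|T|=|S(0)|\}$ and argues that some fixed finite \emph{window} (hence some fixed cycle, traversed consecutively) must recur infinitely often; the summability of $A_{S(k+1),S(k)}(k)$ over those recurrences then forces every edge of that cycle into $\E_0(\M)$. You instead isolate the recurrent vertex set $V^*$, assign to each $v\in V^*$ a single recurrent successor $v'$ (with $(v,v')\in\E_0(\M)$ by the same summability), and extract a cycle by iterating this self-map on a finite set. Your route is a bit more modular---it never needs the whole cycle to appear consecutively in the walk---while the paper's route makes the connection to the original time axis more explicit. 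Both are standard finite-combinatorics arguments and neither buys materially more generality here.
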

\begin{proof}
Suppose that $G_0(M)$ has a cycle $S=S_0\to \cdots \to S_r=S$ where
$r\geq 1$ and  $(S_\tau,S_{\tau+1})\in \E_0(\M)$ for all
$\tau\in \{0,\ldots,r-1\}$.
By the definition of the zero-flow graph, for each
$\tau\in\{0,\ldots,r-1\}$, there exists a sequence of matrices
$\{A^{(\tau)}(k)\}$ in $\M$ such that
\[A^{(\tau)}_{S_{\tau+1}S_{\tau}}(k)\leq \frac{1}{2^k}.\]

Now, consider the chain
\[\{\ldots,A^{(1)}(1),A^{(0)}(1),A^{(r-1)}(0),\ldots,A^{(1)}(0),A^{(0)}(0)\},\]
or more precisely, the chain $\Ac$ defined by
\[A(k)
=A^{(k-\lfloor\frac{k}{r}\rfloor)}(\lfloor\frac{k}{r}\rfloor),\]
where $\lfloor a\rfloor$ is the largest integer that is less than
or equal to $a$.
Next, consider the regular sequence $\{T(k)\}$ defined by
\[T(k)=S_{k-\lfloor\frac{k}{r}\rfloor}\qquad\mbox{ for all $k\geq 0$}.\]
Note that, by the definition of the zero-flow graph,
if $S_0\to S_1\to \cdots \to S_r$ is a cycle in $G_0(\M)$
it follows that $|S_0|=\cdots=|S_{r-1}|$ and hence,
$\{T(k)\}$ is a regular sequence.
For the chain $\Ac$ and the regular sequence $\{T(k)\}$, we have:
\begin{align*}
      F(\Ac;\{T(k)\})&=\sum_{k=0}^{\infty}A_{T(k+1)T(k)}(k)\cr
      &=
\sum_{k=0}^{\infty}A^{(k-\lfloor\frac{k}{r}\rfloor)}_{k+1-
\lfloor\frac{k}{r}\rfloor,k-\lfloor\frac{k}{r}\rfloor}
(\lfloor\frac{k}{r}\rfloor)\cr
&=\sum_{k=0}^{\infty}\sum_{\tau=0}^{r-1}\frac{1}{2^k}=2r<\infty.
    \end{align*}
Thus, $\Ac$ does not have the absolute infinite flow property.

Now, assume that $\M$ does not have the absolute infinite flow property.
Then, we have $F(\Ac;\Tc)<\infty$ for some chain $\Ac$ in $\M$ and
a regular sequence $\Tc$. Consider the complete directed graph
$K_{T(0)}$, associated with the set $T(0)$ and given by
$K_{T(0)}=(V_{T(0)}, V_{T(0)}\times V_{T(0)})$ where
vertex set $V_{T(0)}$ is the set of all subsets of $[m]$
with cardinality $|T(0)|$.
Now for the regular set $\Tc$, consider the infinite walk
$T(0)\to T(1)\to T(2)\to\cdots$ in $K_{T(0)}$. Since
$K_{T(0)}$ is a finite graph, it has finitely many cycles and
one of its cycles must be traced infinitely many times by this
infinite length walk. Let $S_0\to S_1 \to\cdots\to S_{r-1}\to S_{r}=S_0$
be such a cycle. Then, there exists an increasing sequence of time instances
$0\leq t_0<t_1<\cdots$ such that $T(t_k+\tau)=S_\tau$
for all $\tau\in\{0,\ldots,r\}$ and all $k\geq 0$. Therefore,
     \begin{align}\nonumber
        \sum_{k=0}^{\infty}\sum_{\tau=0}^{r-1}A_{S_{\tau+1}S_{\tau}}(t_k+\tau)&
        =\sum_{k=0}^{\infty}
       \sum_{\tau=0}^{r-1}A_{T(t_k+\tau+1)T(t_k+\tau)}(t_k+\tau)\cr
        &\leq \sum_{k=0}^{\infty}A_{T(k+1)T(k)}(k)\leq F(\Ac;\Sc)<\infty.
     \end{align}
As a result, we have $\lim_{k\to\infty}A_{S_{\tau+1}S_{\tau}}(t_k+\tau)=0$ for
all $\tau\in \{0,\ldots,r-1\}$, implying that
$(S_{\tau},S_{\tau+1})\in\E_0(\M)$ for all $\tau$ and hence,
$G_0(\M)$ contains a cycle.
  \end{proof}

Based on Lemma~\ref{lemma:G0absoluteinf} and the results established in
the preceding sections, the following result emerges immediately.

\begin{theorem}\label{thrm:aasnecessity}
 A collection $\M$ is absolutely asymptotically stable
 only if its zero-flow graph $G_0(\M)$ is cycle-free.
\end{theorem}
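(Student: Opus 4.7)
The plan is to prove this by chaining together results already established in the paper. First I would unpack the two relevant definitions: by Definition~\ref{def:AAS}, saying that $\M$ is absolutely asymptotically stable is equivalent to saying every chain $\Ac$ in $\M$ is ergodic. Likewise, the notion introduced just before Lemma~\ref{lemma:G0absoluteinf} says that $\M$ has the absolute infinite flow property precisely when every chain in $\M$ does.

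With these unpackings in hand, I would invoke Theorem~\ref{thrm:necessity}, which asserts that for any stochastic chain, ergodicity implies the absolute infinite flow property. Applied pointwise to each chain $\Ac$ in $\M$, this upgrades ``every chain in $\M$ is ergodic'' to ``every chain in $\M$ has the absolute infinite flow property,'' i.e.\ $\M$ has the absolute infinite flow property in the collective sense. At that point, Lemma~\ref{lemma:G0absoluteinf} directly converts this into the graph-theoretic conclusion that $G_0(\M)$ is cycle-free.

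There is no serious obstacle here: the whole argument is a two-step implication $\text{(absolute asymptotic stability)} \Rightarrow \text{(absolute infinite flow for }\M\text{)} \Rightarrow \text{(}G_0(\M)\text{ cycle-free)}$, with the first arrow supplied by Theorem~\ref{thrm:necessity} and the second by Lemma~\ref{lemma:G0absoluteinf}. If anything, the only subtle point worth stating explicitly is that the ``every chain'' quantifier lines up correctly across the two notions, so that Theorem~\ref{thrm:necessity}, which is a statement about a single chain, can be applied uniformly to deduce a property of the whole collection $\M$. I would present the proof contrapositive-free, in three short sentences corresponding to these three logical steps.
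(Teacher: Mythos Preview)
Your proposal is correct and matches the paper's proof essentially line for line: the paper also invokes Theorem~\ref{thrm:necessity} to pass from absolute asymptotic stability of $\M$ to the absolute infinite flow property of $\M$, and then applies Lemma~\ref{lemma:G0absoluteinf} to conclude that $G_0(\M)$ is cycle-free. Your remark about the ``every chain'' quantifier lining up is a nice explicit observation that the paper leaves implicit.
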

  \begin{proof}
  By Theorem~\ref{thrm:necessity}, $\M$ is absolutely asymptotically stable
only if $M$ has the absolute infinite flow property.
At the same time, by Lemma~\ref{lemma:G0absoluteinf}, $\M$ has the
absolute infinite flow property if and only if $G_0(\M)$ is cycle-free.
  \end{proof}

Now, let us focus on the case when the collection $\M$
contains only doubly stochastic matrices. In this case, the
reverse implication of Theorem~{thrm:aasnecessity}result also holds.
\begin{theorem}\label{thrm:aasnecessityandsufficient}
A collection $\M$ of doubly stochastic matrices is absolutely
asymptotically stable if and only if its zero-flow graph
$G_0(\M)$ is cycle-free.
\end{theorem}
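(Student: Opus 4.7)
The necessity direction is already delivered by Theorem~\ref{thrm:aasnecessity}, so the plan focuses entirely on the reverse (sufficiency) implication, and the strategy is to chain together the two major equivalences established earlier in the paper.

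First, I would translate the cycle-freeness hypothesis into a flow statement using Lemma~\ref{lemma:G0absoluteinf}. That lemma states that $\M$ has the absolute infinite flow property (in the sense that every chain $\Ac$ in $\M$ has that property) if and only if $G_0(\M)$ is cycle-free. So assuming $G_0(\M)$ is cycle-free, I immediately get: for every chain $\Ac$ with $A(k)\in\M$ for all $k$, and for every regular sequence $\Sc$, the total flow satisfies $F(\Ac;\Sc)=\infty$.

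Next, I would invoke the doubly stochastic hypothesis. Every chain $\Ac$ in $\M$ is a doubly stochastic chain, so Theorem~\ref{thrm:infflowdoubly} applies: for such chains, ergodicity is equivalent to the absolute infinite flow property. Combined with the previous step, this yields that every chain $\Ac$ in $\M$ is ergodic. By Definition~\ref{def:AAS}, this is exactly absolute asymptotic stability of $\M$, completing the sufficiency direction and, together with Theorem~\ref{thrm:aasnecessity}, the full equivalence.

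There is no real obstacle here; the proof is essentially bookkeeping that concatenates Lemma~\ref{lemma:G0absoluteinf} (graph-theoretic characterization of absolute infinite flow) with Theorem~\ref{thrm:infflowdoubly} (absolute infinite flow $\Leftrightarrow$ ergodicity for doubly stochastic chains). The only mildly delicate point to note in writing is that absolute asymptotic stability is defined per-chain (every chain in $\M$ is ergodic) and the absolute infinite flow property of a collection is likewise defined per-chain, so the quantifiers match up cleanly; there is no need to pass to any uniform version of these properties.
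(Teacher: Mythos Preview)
Your proposal is correct and takes essentially the same approach as the paper: both arguments concatenate Lemma~\ref{lemma:G0absoluteinf} and Theorem~\ref{thrm:infflowdoubly}. The only cosmetic difference is that the paper chains the two biconditionals directly (absolute asymptotic stability $\Leftrightarrow$ absolute infinite flow via Theorem~\ref{thrm:infflowdoubly}, then $\Leftrightarrow$ cycle-free via Lemma~\ref{lemma:G0absoluteinf}) rather than splitting into a separate necessity step through Theorem~\ref{thrm:aasnecessity}, but the content is identical.
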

  \begin{proof}
By Theorem~\ref{thrm:infflowdoubly}, $\M$ is absolutely
asymptotically stable if and only if $\M$ has the infinite flow property,
which by Lemma~\ref{lemma:G0absoluteinf} holds if and only if $G_0(\M)$
is cycle-free.
\end{proof}

Suppose that, in addition, we further restrict the matrices in the collection
$\M$ to the matrices resulting in chains $\Ac$ with trivial permutation component $\{I\}$. Then, we can have an alternative way of asserting
the absolute asymptotic stability of $\M$.
\begin{theorem}
Let $\M$ be a collection of doubly stochastic matrices
such that any chain in $\M$ has the trivial permutation component $\{I\}$.
Then, $\M$ is absolutely asymptotically stable if and only if
$G_0(\M)$ is loop-less, which in turn holds if and only if
$\inf_{A\in \M,S\subset[m]}A_{S}>0$.
\end{theorem}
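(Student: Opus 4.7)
The plan is to reduce the standing hypothesis on $\M$ to a quantitative diagonal lower bound, and then to observe that this bound forces $G_0(\M)$ to contain only loop edges, so that the two equivalences both drop out of Theorem~\ref{thrm:aasnecessityandsufficient}.

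First, I would show that the hypothesis ``every chain in $\M$ has trivial permutation component $\{I\}$'' is equivalent to
\[\beta \;:=\; \inf_{A\in\M,\,i\in[m]}A_{ii} \;>\; 0.\]
For the ``$\Leftarrow$'' direction, if $\beta>0$ then every $A\in\M$ can be written as $A=\beta I+(1-\beta)\tilde A$ with $\tilde A:=\tfrac{1}{1-\beta}(A-\beta I)$; this $\tilde A$ is doubly stochastic because $A-\beta I\ge0$ entrywise, $(A-\beta I)e=(1-\beta)e$, and $A^Te=e$, so every chain in $\M$ is decomposable with trivial permutation component and mixing coefficient $\beta$. For the converse I would argue by contrapositive: if $\inf A_{ii}=0$ then a pigeonhole argument produces a fixed index $i\in[m]$ and matrices $A^{(k)}\in\M$ with $A^{(k)}_{ii}\to 0$; the chain $A(k):=A^{(k)}$ then admits no $\gamma>0$ satisfying $A_{ii}(k)\ge\gamma$ uniformly in $k$, contradicting decomposability with~$\{I\}$.

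Next, I would argue that under $\beta>0$ the zero-flow graph $G_0(\M)$ contains only loops. The key observation is that whenever $S,T\subset[m]$ are nonempty with $|S|=|T|$ and $S\ne T$, we have $T\setminus S\ne\emptyset$, and for any $i\in T\setminus S$ one has $i\in T$ and $i\in\bar S$, so the diagonal entry $A_{ii}$ appears as one of the nonnegative summands in
\[A_{TS} \;=\; \sum_{p\in T,\,q\in\bar S}A_{pq} \;+\; \sum_{p\in\bar T,\,q\in S}A_{pq}.\]
Hence $A_{TS}\ge A_{ii}\ge\beta$ for every $A\in\M$, so $\inf_{A\in\M}A_{TS}\ge\beta>0$ and $(S,T)\notin\E_0(\M)$. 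Consequently the only possible edges of $G_0(\M)$ are loops $(S,S)$.

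The two equivalences are then immediate. Since $G_0(\M)$ contains only loops, it is cycle-free if and only if it is loop-less, so by Theorem~\ref{thrm:aasnecessityandsufficient} the collection $\M$ is absolutely asymptotically stable iff $G_0(\M)$ is loop-less. Finally, setting $T=S$ in the formula above gives $A_{SS}=A_S$, so ``loop-less'' means $\inf_{A\in\M}A_S>0$ for every nonempty $S\subset[m]$; since $S$ ranges over a finite collection, this is equivalent to the single bound $\inf_{A\in\M,\,S\subset[m]}A_S>0$. The main subtlety is the first step, where one must check that the chain-level hypothesis really does force the diagonal bound to be \emph{uniform} across all of $\M$ (rather than only pointwise); once that is in hand, the geometry of $G_0(\M)$ collapses to a purely finite ``loop'' calculation and the rest is bookkeeping.
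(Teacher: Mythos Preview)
Your argument is correct, and it takes a somewhat different route from the paper's own proof. The paper does not pass through the uniform diagonal bound $\beta>0$ at all: it simply applies Theorem~\ref{thrm:ergodicityfeedback} to each chain in $\M$ (which is legitimate because, by hypothesis, each chain has trivial permutation component), so that absolute asymptotic stability is equivalent to every chain having the infinite flow property; then it reruns the loop case of the Lemma~\ref{lemma:G0absoluteinf} argument to show that this is equivalent to $\inf_{A\in\M}A_S>0$ for every nontrivial $S$, which is exactly the loop-less condition on $G_0(\M)$.

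Your approach instead first upgrades the chain-level hypothesis to the uniform bound $\beta=\inf_{A,i}A_{ii}>0$, then uses that bound to kill all non-loop edges of $G_0(\M)$ via the observation $A_{TS}\ge A_{ii}\ge\beta$ for $i\in T\setminus S$, and finally invokes the general Theorem~\ref{thrm:aasnecessityandsufficient} so that ``cycle-free'' collapses to ``loop-less''. This buys a structural explanation the paper does not make explicit: under the hypothesis, $G_0(\M)$ literally has no edges between distinct vertices, so the full cycle-free criterion degenerates to a loop check. The cost is the extra step of proving $\beta>0$ from the chain-level assumption, which you handle correctly by the pigeonhole/contrapositive argument; the paper avoids this step by appealing to Theorem~\ref{thrm:ergodicityfeedback} rather than Theorem~\ref{thrm:aasnecessityandsufficient}.
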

\begin{proof}
Let $\Ac$ be a chain in $\M$ with the trivial permutation component. Then, by Theorem~\ref{thrm:ergodicityfeedback}, such a chain is ergodic if and only if it has the infinite flow property, i.e.\ $\sum_{k=0}^{\infty}A_{SS}(k)=\infty$ for any non-trivial $S\subset[m]$. Using the same lines of argument as in the proof of Theorem~\ref{thrm:aasnecessity}, it can be shown that this happens if and only if $\inf_{A\in \M}A_{SS}>0$ for any non-trivial $S\subset[m]$. But this happens if and only if $G_0(\M)$ is loop-less.
\end{proof}

\section{Conclusion}\label{sec:conclusion}
In this paper, we studied backward product of a chain of stochastic and doubly
stochastic matrices. We introduced the concept of absolute infinite flow
property and studied its relation to ergodicity of the chains.
In our study, a rotational transformation of a chain and the properties
of the transformation played important roles in the development.
We showed that this transformation preserves many properties of the original
chain. Based on the properties of rotational transformation,
we proved that absolute infinite flow property is necessary for ergodicity of
a stochastic chain. Moreover, we showed that this property is also sufficient
for ergodicity of doubly stochastic chains. Then, we developed a rate of
convergence for ergodic doubly stochastic chains. Moreover, we characterized
the limiting behavior of a doubly stochastic chain in the absence of
ergodicity. Finally, using the established results, we derived necessary and
sufficient conditions for the stability of switching systems
driven by stochastic and doubly stochastic matrices.

\appendix
Proof of Lemma~\ref{lemma:rateofconvtrivial}.
\begin{proof}
We prove the assertion for $q=1$. The case of an arbitrary $q\geq 0$ follows
similarly. Let $x(0)\in\Rm$ be an arbitrary starting point and let $\xc$
be the dynamics driven by the doubly stochastic chain $\Ac$. Without loss of
generality, assume that $x_1(0)\leq \ldots \leq x_m(0)$.
By the definition of $t_1$, we have $\sum_{k=0}^{t_1-1}A_{S}(k)\geq \delta$
for any $S\subset[m]$. Thus by Lemma~12 in \cite{ErgodicityPaper}, we have
\begin{align}\label{eqn:rate1}
\sum_{k=0}^{t_1-1}\sum_{i<j}\
&\left(A_{ij}(k)+A_{ji}(k)\right)(x_{i}(k)-x_j(k))^2\geq
\frac{\delta(1-\delta)^2}{x_m(0)-x_1(0)}
\sum_{\ell=1}^{m-1}(x_{\ell+1}(k)-x_{\ell}(k))^3.
\end{align}
On the other hand, by Lemma~11 in \cite{ErgodicityPaper}, we have
\begin{align}\label{eqn:rate2}
V(x(0))&=\sum_{i=1}^m(x_i(0)-\bar{x}(0))^2\cr
&\leq m(x_m(0)-x_1(0))^2\cr
&\leq m(m-1)\sum_{\ell=1}^{m-1}(x_{\ell+1}(0)-x_{\ell}(0))^2\cr
&\leq m(m-1)^2\frac{1}{x_m(0)-x_1(0)}
\sum_{\ell=1}^{m-1}(x_{\ell+1}(0)-x_{\ell}(0))^3.
\end{align}

Therefore, using Eq.~\eqref{eqn:rate1} and Eq.~\eqref{eqn:rate2}, we obtain
\begin{align}\label{eqn:rate3}
\sum_{k=0}^{t_1-1}\sum_{i<j}\
&\left(A_{ij}(k)+A_{ji}(k)\right)(x_{i}(k)-x_j(k))^2
\geq \frac{\delta(1-\delta)^2}{m(m-1)^2}V(x(0)).
\end{align}
Now, by Lemma~4 in \cite{NedichOlshevskyOzdaglar:09}, we have that
\[V(x(k))=V(x(k-1))-\sum_{i<j}H_{ij}(k-1)(x_i(k-1)-x_j(k-1))^2,\]
where $H(k)=A^T(k)A(k)$. Thus, it follows
\begin{align}\label{eqn:rate4}
V(x(t_1))&= V(x(0))-\sum_{k=0}^{t_1-1}\sum_{i<j}H_{ij}(k)(x_i(k)-x_j(k))^2\cr
&\geq V(x(0))-\gamma
\sum_{k=0}^{t_1-1}\sum_{i<j}\left(A_{ij}(k)+A_{ji}(k)\right)(x_i(k)-x_j(k))^2.
\end{align}
In the last inequality we use the following relation:
\[H_{ij}(k)=\sum_{\ell=1}^mA_{\ell i}(k)A_{\ell j}(k)
\geq A_{ii}(k)A_{ij}(k)+A_{jj}(k)A_{ji}(k)\geq \gamma(A_{ij}(k)+A_{ji}(k)),\]
where the last inequality holds since $\Ac$ has the trivial permutation
component.
Therefore, using Eq.~\eqref{eqn:rate3} and Eq.~\eqref{eqn:rate4}, we conclude
that
\[V(x(t_1))\leq V(x(0))-\frac{\gamma\delta(1-\delta)^2}{m(m-1)^2}\, V(x(0))
=\left(1-\frac{\gamma\delta(1-\delta)^2}{m(m-1)^2}\right)V(x(0)).\]
\end{proof}

\bibliographystyle{IEEEtran}
\bibliography{behrouz-prelimV2}

\begin{thebibliography}{10}
\providecommand{\url}[1]{#1}
\csname url@samestyle\endcsname
\providecommand{\newblock}{\relax}
\providecommand{\bibinfo}[2]{#2}
\providecommand{\BIBentrySTDinterwordspacing}{\spaceskip=0pt\relax}
\providecommand{\BIBentryALTinterwordstretchfactor}{4}
\providecommand{\BIBentryALTinterwordspacing}{\spaceskip=\fontdimen2\font plus
\BIBentryALTinterwordstretchfactor\fontdimen3\font minus
  \fontdimen4\font\relax}
\providecommand{\BIBforeignlanguage}[2]{{%
\expandafter\ifx\csname l@#1\endcsname\relax
\typeout{** WARNING: IEEEtran.bst: No hyphenation pattern has been}%
\typeout{** loaded for the language `#1'. Using the pattern for}%
\typeout{** the default language instead.}%
\else
\language=\csname l@#1\endcsname
\fi
#2}}
\providecommand{\BIBdecl}{\relax}
\BIBdecl

\bibitem{hajnal}
J.~Hajnal, ``The ergodic properties of non-homogeneous finite markov chains,''
  \emph{Proceedings of the Cambridge Philosophical Society}, vol.~52, no.~1,
  pp. 67--77, 1956.

\bibitem{wolfowitz}
J.~Wolfowitz, ``Products of indecomposable, aperiodic, stochastic matrices,''
  \emph{Proceedings of the American Mathematical Society}, vol.~14, no.~4, pp.
  733--737, 1963.

\bibitem{Shen}
J.~Shen, ``A geometric approach to ergodic non-homogeneous markov chains,''
  \emph{Wavelet analysis and Multiresolution Methods}, pp. 341--366, 2000.

\bibitem{DeGroot}
M.~H. DeGroot, ``Reaching a consensus,'' \emph{Journal of American Statistical
  Association}, vol.~69, no. 345, pp. 118--121, 1974.

\bibitem{SenetaCons}
S.~Chatterjee and E.~Seneta, ``Towards consensus: Some convergence theorems on
  repeated averaging,'' \emph{Journal of Applied Probability}, vol.~14, no.~1,
  pp. 89--97, March 1977.

\bibitem{Tsitsiklis86}
J.~Tsitsiklis, D.~Bertsekas, and M.~Athans, ``Distributed asynchronous
  deterministic and stochastic gradient optimization algorithms,'' \emph{IEEE
  Transactions on Automatic Control}, vol.~31, no.~9, pp. 803--812, 1986.

\bibitem{ErgodicityPaper}
B.~Touri and A.~Nedi\'{c}, ``On ergodicity, infinite flow and consensus in
  random models,'' 2010, to appear in IEEE Transactions on Automatic Control.
  Available at:\url{http://arxiv.org/PS_cache/arxiv/pdf/1001/1001.1890v3.pdf}.

\bibitem{TouriNedich:Approx}
B.~Touri and A.~Nedi\'c, ``On approximations and ergodicity classes in random
  chains,'' 2010, submitted for publication. Available at:
  \url{http://arxiv.org/PS_cache/arxiv/pdf/1009/1009.0851v1.pdf}.

\bibitem{NedichOlshevskyOzdaglar:09}
A.~Nedi\'c, A.~Olshevsky, A.~Ozdaglar, and J.~Tsitsiklis, ``On distributed
  averaging algorithms and quantization effects,'' \emph{IEEE Transactions on
  Automatic Control}, vol.~54, no.~11, pp. 2506 -- 2517, November 2009.

\bibitem{Tsitsiklis84}
J.~Tsitsiklis, ``Problems in decentralized decision making and computation,''
  Ph.D. dissertation, Dept. of {E}lectrical {E}ngineering and {C}omputer
  {S}cience, {MIT}, 1984.

\bibitem{Jadbabaie03}
A.~Jadbabaie, J.~Lin, and S.~Morse, ``Coordination of groups of mobile
  autonomous agents using nearest neighbor rules,'' \emph{IEEE Transactions on
  Automatic Control}, vol.~48, no.~6, pp. 988--1001, 2003.

\bibitem{CaoMora}
M.~Cao, A.~S. Morse, and B.~D.~O. Anderson, ``Reaching a consensus in a
  dynamically changing environment a graphical approach,'' \emph{SIAM Journal
  on Control and Optimization}, vol.~47, pp. 575--600, 2008.

\bibitem{CarliFagnani06}
R.~Carli, F.~Fagnani, A.~Speranzon, and S.~Zampieri, ``Communication
  constraints in the average consensus problem,'' \emph{Automatica}, vol.~44,
  no.~3, pp. 671--684, 2008.

\bibitem{Krause:02}
R.~Hegselmann and U.~Krause, ``Opinion dynamics and bounded confidence models,
  analysis, and simulation,'' \emph{Journal of Artificial Societies and Social
  Simulation}, vol.~5, 2002.

\bibitem{Boyd06}
S.~Boyd, A.~Ghosh, B.~Prabhakar, and D.~Shah, ``Randomized gossip algorithms,''
  \emph{IEEE Transactions on Information Theory}, vol.~52, no.~6, pp.
  2508--2530, 2006.

\bibitem{alex_cdc08}
A.~Nedi\'c, A.~Olshevsky, A.~Ozdaglar, and J.~Tsitsiklis, ``On distributed
  averaging algorithms and quantization effects,'' in \emph{Proceedings of the
  47th IEEE Conference on Decision and Control}, 2008, pp. 4825 -- 4830.

\bibitem{HornJohnson}
R.~A. Horn and C.~R. Johnson, \emph{Matrix Analysis}.\hskip 1em plus 0.5em
  minus 0.4em\relax Cambridge University Press, 1985.

\bibitem{Zampieri}
F.~Fagnani and S.~Zampieri, ``Randomized consensus algorithms over large scale
  networks,'' \emph{IEEE Journal on Selected Areas in Communications}, vol.~26,
  no.~4, pp. 634--649, 2008.

\bibitem{Gurvits}
L.~Gurvits, ``Stability of discrete linear inclusion,'' \emph{Linear algebra
  and its applications}, vol. 231, pp. 47--85, 1995.

\end{thebibliography}
\end{document}